\documentclass{amsart}
\usepackage{amssymb,amsthm,geometry,graphicx}
\usepackage{enumerate}
\newtheorem{theorem}{Theorem}[section]

\newtheorem{prop}[theorem]{Proposition}
\newtheorem{observation}[theorem]{Observation}
\newtheorem{remark}[theorem]{Remark}

\theoremstyle{definition}

\numberwithin{equation}{section}
\setcounter{tocdepth}{1}

\DeclareMathOperator{\Vol}{Vol}
\DeclareMathOperator{\ad}{ad}
\DeclareMathOperator{\tr}{trace}
\DeclareMathOperator{\Ad}{Ad}
\DeclareMathOperator{\Aut}{Aut}
\DeclareMathOperator{\ric}{Ric}

\newcommand{\so}{\mbox{${\mathfrak s \mathfrak o}$}}
\newcommand{\su}{\mbox{${\mathfrak s \mathfrak u}$}}

\newcommand{\af}{\mbox{${\mathfrak a}$}}

\newcommand{\g}{\mbox{${\mathfrak g}$}}
\newcommand{\h}{\mbox{${\mathfrak h}$}}
\newcommand{\kf}{\mbox{${\mathfrak k}$}}

\newcommand{\p}{\mbox{${\mathfrak p}$}}

\newcommand{\tf}{\mbox{${\mathfrak t}$}}
\newcommand{\uf}{\mbox{${\mathfrak u}$}}

\newcommand{\C}{\mbox{${\mathbb C}$}}

\newcommand{\R}{\mbox{${\mathbb R}$}}
\newcommand{\Z}{\mbox{${\mathbb Z}$}}


\begin{document}

\title{On the volume of orbifold quotients of symmetric spaces}
\date{\today}

\author{Ilesanmi Adeboye}
\address{Department of Mathematics and Computer Science, Wesleyan University, Middletown, CT 06459}
\email{iadeboye@wesleyan.edu}

\author{McKenzie Wang}
\address{Department of Mathematics and Statistics, McMaster University, Hamilton, ON, L8S 4K1}
\email{wang@mcmaster.ca}
\thanks{MW was partially supported by NSERC grant OPG0009421}

\author{Guofang Wei}
\address{Department of Mathematics, University of California, Santa Barbara, CA 93106}
\email{wei@math.ucsb.edu}
\thanks{GW was partially supported by NSF DMS grant 1506393}

\subjclass[2010]{Primary 53C35, 22E40}

\keywords{Symmetric spaces, lattices, orbifold, volume}

\begin{abstract} A classic theorem of Kazhdan and Margulis states that for any semisimple Lie group without compact factors, there is a positive lower bound on the covolume of lattices. H. C. Wang's subsequent quantitative analysis showed that the fundamental domain of any lattice contains a ball whose radius depends only on the group itself. A direct consequence is a positive minimum volume for orbifolds modeled on the corresponding symmetric space. However, sharp bounds are known only  for hyperbolic orbifolds of dimensions two and three, and recently for quaternionic hyperbolic orbifolds of all dimensions.

As in \cite{AdeWei12} and \cite{AdeWei14}, this article combines H. C. Wang's radius estimate with an improved upper sectional curvature
bound for a canonical left-invariant metric on a real semisimple Lie group and uses Gunther's  volume comparison theorem to deduce an explicit uniform lower volume bound for arbitrary orbifold quotients of a given irreducible symmetric spaces of non-compact type. The numerical bound for the octonionic hyperbolic plane is the first such bound to be given. For (real) hyperbolic orbifolds of dimension greater than three, the bounds are an improvement over what was previously known.

\end{abstract}

\maketitle
\tableofcontents
\bibliographystyle{plain}

\section{Introduction} A Riemannian manifold $(M,g)$ is a \emph{symmetric space} if for each $x\in M$ there exists an isometry $\phi_x$ of $M$ that fixes $x$, and acts on the tangent space of $M$ at $x$ by minus the identity. Let $G$ be the identity component of the  group of isometries of $M$. The \emph{isotropy subgroup} $K$ at a point $x\in M$ consists of the elements of $G$ that fix $x$. The manifold $M$ is then diffeomorphic to the quotient $G/K$. This article considers simply-connected symmetric spaces of non-compact type that are \emph{irreducible}; that is, not a product of symmetric spaces. In this context, $G$ is a non-compact, simple \emph{real} Lie group and $K$ is a maximal compact subgroup.

Let $\Gamma$ be a discrete subgroup. If the quotient $\Gamma\backslash G$ has a finite measure that is invariant under the action of $G$, then $\Gamma$ is called a \emph{lattice}. Fix a left-invariant Haar measure on $G$. Then the volume of $\Gamma\backslash G$ can be given a value for all lattices $\Gamma$ in a consistent manner. A result, due to Kazhdan and Margulis \cite{KM68}, states that for every \emph{semisimple} Lie group without compact factors, there is a positive lower bound on the covolume of lattices.

For symmetric space $G/K$, and lattice $\Gamma$ of $G$, the double coset space $\Gamma\backslash G/K$ is an \emph{orbifold}. It is a \emph{manifold} when $\Gamma$ is torsion free. If we choose a left $G$-invariant, and right $K$-invariant, inner product on $G$, then we get a fixed left-invariant Haar measure on $G$ as well as a Riemannian submersion $\pi:\Gamma\backslash G \to \Gamma\backslash G/K$ that is an orthogonal projection on the tangent spaces. Hence, $\Vol(\Gamma\backslash G)=\Vol(\Gamma\backslash G/K)\cdot \Vol(K)$. With the volume of compact Lie groups well understood (see e.g. \cite{Mac80}), the study of the volume of orbifold quotients of symmetric spaces is equivalent to the study of covolumes of lattices.

First examples of symmetric spaces of non-compact type are the hyperbolic spaces for each dimension: $\mathbf H^n=SO_0(n,1)/SO(n)$. Complex hyperbolic space $\mathbb C\mathbf H^n$, quaternionic hyperbolic space $\mathbb H\mathbf H^n$, and the octonionic hyperbolic plane $\mathbb O\mathbf H^2$ are the analogs of hyperbolic space defined over each of the remaining normed division algebras. Together, \emph{the hyperbolic spaces} exhaust the category of non-compact symmetric spaces of rank one.

The (real) hyperbolic 2-orbifold of minimum volume was identified by Siegel \cite{Siegel45} in 1945. The corresponding result for 3-orbifolds was proved by Gehring and Martin \cite{GehringMartin09} in 2009. Recently, Emery and Kim determined the quaternionic hyperbolic lattices of minimal covolume for each dimension \cite{EmeryKim}. Fran\c{c}ois Thilmany proved the corresponding result for ${\rm SL}_n(\mathbb R)$ \cite{Thilmany}. These remain the only cases where the minimum orbifold volumes are known. Under various geometric and algebraic constraints (e.g. manifold, cusped, arithmetic) lower bounds for the volume of orbifold quotients of the hyperbolic spaces have been constructed by several authors. Section \ref{history} provides an overview of these results.

Theorem~\ref{MainThm} presents a unified treatment of orbifold volume bounds for non-compact irreducible symmetric spaces. In particular, the octonion hyperbolic case is addressed, as well as spaces of higher rank. For a given semisimple Lie group without compact factors, a computable lower bound on the covolume of lattices, based on the \emph{Margulis constant}, was produced by Gelander in \cite{Gelander}. The techniques of this paper are those of \cite{AdeWei12} and \cite{AdeWei14}, in which hyperbolic and complex hyperbolic volume bounds were derived by combining volume comparison methods in Riemannian geometry with H. C. Wang's quantitative version of the Kazhdan-Margulis theorem. Our results here in particular improve the bounds of \cite{AdeWei12} and \cite{AdeWei14}, and do not require an estimate of the Margulis constant. We have also compared our bound with that of \cite{Gelander} for hyperbolic $4$-orbifolds, using the estimate of the Margulis constant in \cite{Ballmann}, and found that our bound is an improvement.

In \cite{KM68}, Kazhdan and Margulis proved  that every semisimple Lie group without compact factors contains a \emph{Zassenhaus neighborhood} $U$ of the identity; that is \emph{any} discrete subgroup $\Gamma$ in $G$ has a conjugate that intersects $U$ trivially. H.C. Wang undertook a \emph{quantitative} study of Zassenhaus neighborhoods in \cite{Wang69}. Wang associates to each group $G$ a positive real number $R_G$. The value of $R_G$ depends on two constants $C_1$ and $C_2$, which in turn are derived from the root system of, and a choice of inner product on, the Lie algebra of $G$. Using the canonical metric mentioned above, Wang proves that the volume of the fundamental domain of any discrete group $\Gamma<G$ is bounded below by the volume of a ball of radius $R_G/2$. An appendix in \cite{Wang69} lists the values of $C_1$ and $C_2$ for the \emph{classical} non-compact simple Lie groups.

Equipped with the radius $R_G/2$, a lower bound for the volume of the ball in Wang's theorem can be computed from an upper bound on the sectional curvature of the canonical metric using a comparison theorem due to Gunther \cite{Gall}. In this article, the values of $C_1$ and $C_2$ for the \emph{exceptional} non-compact simple Lie groups are computed (see Theorem~\ref{exceptionalgps}) together with those of the classical groups
in a uniform manner. In addition to their role in determining $R_G$, the values of $C_1$ and $C_2$ allow for a uniform estimate of the sectional curvatures of our canonical metric on $G$. As a result, volume bounds for all orbifold quotients of symmetric spaces of non-compact type can be calculated.

  \section{Symmetric Spaces}\label{Sec:SS}

  This section describes the common geometry of symmetric spaces. Further details on the concepts discussed here can be found in Besse \cite{Besse} and Helgason \cite{Helgason}.

 Let $G$ be a Lie group and let $\mathfrak g$ denote the associated Lie algebra. For $X\in\mathfrak g$, \emph{adjoint action} of $X$ is the $\mathfrak g$-endomorphism defined by the Lie bracket; \[\ad X(Y):=[X,Y].\]The \emph{Killing form} on $\mathfrak g$ is the symmetric bilinear form given by \[B(X,Y):=\tr(\ad X \circ \ad Y),\] which is ${\rm Ad}(G)$-invariant.

 By Cartan's criterion, a Lie algebra $\mathfrak g$, and corresponding Lie group $G$, is \emph{semisimple} if and only if the Killing form on $\mathfrak g$ is nondegenerate. A \emph{Cartan decomposition} for a semisimple Lie algebra $\mathfrak g$ is a decomposition \[\mathfrak g = \mathfrak k \oplus \mathfrak p,\] where $[\mathfrak k,\mathfrak k]\subseteq\mathfrak k,[\mathfrak k$, $\mathfrak p]\subseteq\mathfrak p$, and $[\mathfrak p,\mathfrak p]\subseteq\mathfrak k$. Equivalently, one may specify an involutive automorphism $\theta$ of $\g$, in which case
 $\kf$ and $\p$ are respectively the $+1$ and $-1$ eigenspaces of $\theta$. A semisimple Lie algebra may admit more than one Cartan decomposition. In what follows, we are considering semisimple Lie groups together with a \emph{fixed} Cartan decomposition. In this case, the Killing form induces a positive definite inner product (left-invariant Riemannian metric) on $\mathfrak g$ given by  \[ \langle X,Y\rangle = \left\{ \begin{array}{lll} B(X,Y) & \mbox{for } X,Y\in\mathfrak p, \\
-B(X,Y) & \mbox{for } X,Y\in\mathfrak k, \\ 0 & \mbox{otherwise. } \end{array} \right. \]
Then we have $\langle X, Y \rangle = -B(X, \theta(Y))$. This inner product is ${\rm Ad}(K)$-invariant and makes the projection
$\Gamma\backslash G \rightarrow \Gamma\backslash G/K$ into a Riemannian submersion with totally geodesic fibres.
We note here that in Sections ~\ref{Sec:Cs} and ~\ref{Sec:Determine}, we will abuse notation and use $\langle \cdot, \cdot \rangle$ to denote the analogous
metric on $G$ defined by a more convenient scalar multiple of $B$.

The canonical identification of the Lie algebra of a Lie group with its tangent space at the identity, $\mathfrak g\simeq T_eG,$ extends the inner product $\langle \, ,\, \rangle$ to a Riemannian metric on $G$ by left translation. The induced distance function, referred to as \emph{canonical distance}, is denoted by $\rho$.

The objects of our study are \emph{irreducible, simply connected symmetric spaces of non-compact type}.  These spaces are always quotients $G/K$, where $G$ is a real simple non-compact Lie group, and $K=\exp\mathfrak k$ is a maximal compact subgroup of $G$. The restriction of $\langle,\rangle$ to $\mathfrak p\simeq T_{eK}(G/K)$ induces a $G$-invariant Riemannian metric on $G/K$. Such a space is \emph{of type III} if $\mathfrak g^{\mathbb C}=\mathfrak g\otimes\mathbb C$, the complexification of $\mathfrak g$, is simple as a complex Lie algebra, and \emph{of type IV} if not. Note that hyperbolic 3-space $\mathbf H^3$ has both a type III, ${\rm SO}_0 (3, 1)/{\rm SO}(3)$, and a type IV, ${\rm PSL}(2, \mathbb C)/{\rm SO}(3)$, representation.

In what follows, we will make significant use of the correspondence between a symmetric space of non-compact type $G/K$ and its \emph{compact dual} $U/K$. The construction of the Lie group $U$ from $G$ is as follows: Let $G^{\mathbb C}$ be the simply connected complex Lie group that corresponds to $\mathfrak g^{\mathbb C}$. If $\mathfrak g = \mathfrak k \oplus \mathfrak p$, then $\mathfrak u=\mathfrak k\oplus i\mathfrak p$ is a (real) Lie subalgebra of $\mathfrak g^{\mathbb C}$. Let $U$ be the subgroup of $G^{\mathbb C}$ generated by $\mathfrak u$. Since the Killing form $B$ is negative definite on $\mathfrak u$ and $\mathfrak k \subset\mathfrak u$, $U$ is compact and contains $K$. A compact symmetric space is \emph{of type I} or \emph{of type II} if it is dual to, respectively, a space of type III or type IV.

Type II symmetric spaces have the form $(K\times K)/K$. That is, $U=K\times K$ for some compact simple Lie group $K$, and the quotient space is formed with the diagonally embedded subgroup $\Delta K$, also identified with $K$. Tables of symmetric spaces, classified by type, can be found in \cite[p. 201--202]{Besse} and \cite[p. 518]{Helgason}.


The \emph{adjoint representation} of a Lie group $G$, $\Ad: G\rightarrow \Aut(\mathfrak g)$, sends an element of $g\in G$ to the derivative at the identity of the corresponding inner automorphism of $G$. Note that $\Ad_g$ is an isometry of (any multiple) of the Killing form, as well as a Lie algebra automorphism.

The \emph{isotropy representation} of a homogeneous space at a point $x$ is the infinitesimal linear action of $K$ on the tangent space at $x$. Isotropy representations of symmetric spaces are often called \emph{s-representations}. For $G/K$ or $U/K$, \emph{s}-representations can also be described as restrictions of the adjoint action of $G$ (resp. $U$) to $K$ on $\mathfrak p$ (resp. $i\mathfrak p$).

\section{The Constants $C_1$ and $C_2$} \label{Sec:Cs}

The main ingredients in our determination of a lower bound for the volume of $\Gamma \backslash G/K$ are, an explicit positive lower bound for the  size of a fundamental domain of $\Gamma$, and an upper bound for the sectional curvature of $G$. Both quantities are determined by two values, $C_1$ and $C_2$, that in turn depend on the root system of, and a choice of metric on, the Lie algebra $\mathfrak g$ of $G$.

The \emph{length} of $X\in\mathfrak g$ is given by $\|X\|=\langle X,X\rangle^{1/2}$, where the inner product is that defined in Section~\ref{Sec:SS}. The \emph{norm} of an endomorphism $f:\mathfrak g\to\mathfrak g$ is defined by \[N(f)=\sup\left\lbrace\|f(X)\|:X\in\mathfrak g,\|X\|=1\right\rbrace.\]

In \cite{Wang69}, H. C. Wang defined the constants $C_1$ and $C_2$ as follows:
\[C_1 =\sup\left\lbrace N(\ad X)~|~X\in\mathfrak p,\|X\|=1\right\rbrace,\]
\[C_2 =\sup\left\lbrace N(\ad X)~|~X\in\mathfrak k,\|X\|=1\right\rbrace.\]

We give here our refinement of the definitions for $C_1$ and $C_2$. As indicated in the previous section, the group $K$ acts on $\mathfrak p$ and $\mathfrak k$, respectively, by the isotropy or adjoint representation. Recall that for the adjoint representation of the compact Lie group $K$, each element of $\mathfrak k$ is conjugate to an element in a fixed maximal abelian subgalgebra of $\mathfrak k$. Let $X\in\mathfrak k$ and choose $k\in K$ such that $\Ad_k(X)=W$ lies in the preselected maximal abelian subalgebra $\mathfrak a\subset\mathfrak k$. Let $Y$ be any element of $\mathfrak g$. Then,
\begin{align*}
\|\ad X(Y)\|=\|[X,Y]\|&=\|\Ad_k[X,Y]\|\quad\text{($\Ad_k$ is an isometry of the Killing form)}\\
&=\|[\Ad_k(X),\Ad_k(Y)]\|\quad\text{($\Ad_k$ is a Lie algebra automorphism)}\\
&=\|[W,\Ad_k(Y)]\|\\
&=\|\ad W\big(\Ad_k(Y)\big)\|
\end{align*}
Note that as $Y$ runs through all values of $\mathfrak g$, so does $\Ad_k(Y)$.

Less well known is that s-representations, which are examples of \emph{polar actions} (see \cite{Da}), also admit real maximal abelian subalgebras that contain an element from each orbit. Therefore, a similar calculation holds for $X\in\mathfrak p$. Hence we have,

\begin{remark} In determining the constants $C_1$ and $C_2$, it is sufficient to restrict our attention to the norm one elements lying in a fixed but arbitrary maximal abelian subalgebra of $\mathfrak p$ for $C_1$ and of $\mathfrak k$ for $C_2$.
\end{remark}

\begin{theorem}[Wang \cite{Wang69}]\label{RG} Let $G$ be a semisimple Lie group without compact factors, let $e$ be the identity of $G$, let $\rho$ be the canonical distance function, let $R_G$ be the least positive zero of the function \[F(t)=e^{C_1t}-1+2\sin C_2t-\frac{C_1t}{e^{C_1t}-1},\] and let \[\mathcal B_G=\{g\in G~|~\rho(e,g)\leq R_G\}.\] Then for any discrete subgroup $\Gamma$ of $G$, there exists $g\in G$, such that $\mathcal B_G\cap g\Gamma g^{-1}=\{e\}$.
\end{theorem}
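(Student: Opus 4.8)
The plan is to recast the conclusion as a statement about displacements and then drive the whole argument with a single quantitative commutator estimate. Since $\rho$ is left-invariant, left translations are isometries, so for $\gamma\in\Gamma$ one has $\rho(e,g\gamma g^{-1})=\rho(g^{-1},\gamma g^{-1})$; setting $x=g^{-1}$, the condition $\mathcal B_G\cap g\Gamma g^{-1}=\{e\}$ becomes
\[ \rho(x,\gamma x) > R_G \quad \text{for every } \gamma \in \Gamma\setminus\{e\}. \]
Thus it suffices to exhibit a single point $x\in G$ whose displacement under every nontrivial element of $\Gamma$ exceeds $R_G$. I would first record, in exponential coordinates, an inequality relating the canonical distance $\rho(e,\exp X)$ to $\|X\|$ over the relevant range, so that all subsequent estimates can be carried out in $\mathfrak{g}$.

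The heart of the proof is a contraction estimate for commutators, and this is where $F$ is manufactured. Writing elements as $\exp X$ and using the Cartan decomposition $\mathfrak{g}=\mathfrak{k}\oplus\mathfrak{p}$, the operator $\ad X$ is self-adjoint on the $\mathfrak{p}$-directions (real eigenvalues of modulus at most $C_1\|X\|$) and skew-adjoint on the $\mathfrak{k}$-directions (imaginary eigenvalues of modulus at most $C_2\|X\|$) with respect to $\langle\cdot,\cdot\rangle$. Exponentiating, the $\mathfrak{p}$-part of a conjugation grows like $e^{C_1 t}$ while the $\mathfrak{k}$-part oscillates like $\sin(C_2 t)$, which accounts for the first two terms of $F$; the subtracted Bernoulli-type factor $\tfrac{C_1 t}{e^{C_1 t}-1}$ is the norm bound on the differential of $\exp$ (equivalently on $\tfrac{\ad X}{e^{\ad X}-1}$) needed to pass from the group commutator back to a bracket estimate in $\mathfrak{g}$. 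Assembling these, I would show that if $g$ and $h$ both lie within canonical distance $t<R_G$ of $e$, then the sign condition $F(t)<0$ forces $\rho(e,ghg^{-1}h^{-1})<\rho(e,h)$: the commutator is strictly closer to $e$ than its generator. Since $F(0^+)=-1$ and $R_G$ is its least positive zero, this strict contraction holds throughout the open ball and degenerates exactly at radius $R_G$.

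With the contraction in hand, the existence of the desired $x$ (equivalently $g$) follows by a Zassenhaus-type argument together with the no-compact-factors hypothesis. Suppose, for contradiction, that every conjugate of $\Gamma$ meets $\mathcal B_G$ nontrivially. Fixing a base point and feeding the small elements through the commutator estimate produces an infinite sequence of iterated commutators whose canonical distances to $e$ strictly decrease; by discreteness this sequence must terminate in the identity, so the subgroup generated by the small elements is nilpotent and lies in a connected nilpotent subgroup of $G$. Because $G$ is semisimple without compact factors, a nontrivial such nilpotent subgroup has its nontrivial elements in no compact subgroup, so its conjugacy orbits are unbounded and one can conjugate to force every displacement above $R_G$. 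This produces a $g$ with $g\Gamma g^{-1}\cap\mathcal B_G=\{e\}$.

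The main obstacle is the commutator estimate of the second paragraph: making it sharp enough that the transition occurs exactly at the least zero of $F$ rather than at some cruder threshold. This requires tracking the $\mathfrak{p}$- and $\mathfrak{k}$-components of $\ad X$ separately and uniformly over the sphere $\|X\|=1$ (which is precisely why $C_1$ and $C_2$ are defined as suprema over unit vectors in $\mathfrak{p}$ and $\mathfrak{k}$), controlling the higher-order Baker--Campbell--Hausdorff terms without destroying the exponential-versus-trigonometric split, and converting between the group distance $\rho$ and the algebra norm $\|\cdot\|$ with constants that do not degrade the bound. The Remark preceding the statement --- that one may restrict to a maximal abelian subalgebra of $\mathfrak{p}$ (resp. $\mathfrak{k}$) --- is what keeps these suprema, and hence the estimate, tight and effectively computable.
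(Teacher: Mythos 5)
This theorem is not proved in the paper at all: it is imported verbatim from Wang \cite{Wang69}, so your proposal has to stand on its own as a reconstruction of Wang's argument. Your first two paragraphs are a reasonable sketch of the analytic half. The reduction to displacements via left-invariance is correct, and the three terms of $F$ do arise essentially as you say (exponential growth from the $\mathfrak{p}$-direction, oscillation from the $\mathfrak{k}$-direction, and the Bernoulli-type factor from the differential of $\exp$). One caveat: your spectral bookkeeping is stated loosely. It is $\ad X$ for $X\in\mathfrak{p}$ that is self-adjoint and $\ad U$ for $U\in\mathfrak{k}$ that is skew-adjoint with respect to $\langle\cdot,\cdot\rangle$; for a mixed element $X+U$ these two parts do not commute, so one cannot simply read off eigenvalues of $\ad(X+U)$ and exponentiate them. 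Wang instead uses the polar decomposition $g=\exp X\cdot k$ with $X\in\mathfrak{p}$, $k\in K$, and estimates $\Ad(\exp X)$ and $\Ad(k)-I$ separately, which is where $e^{C_1t}-1$ and $2\sin C_2t$ actually enter. This is repairable, but as written the estimate would not go through for a general group element.

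The genuine gap is in your third paragraph. The Zassenhaus half is fine in outline: iterated commutators of elements of $\Gamma\cap\mathcal B_G$ contract, discreteness together with compactness of the closed ball forces them to terminate at $e$, and the group generated by $\Gamma\cap\mathcal B_G$ is nilpotent. But the final step --- ``its conjugacy orbits are unbounded and one can conjugate to force every displacement above $R_G$'' --- is not a proof, and the claim supporting it is false: a connected nilpotent subgroup of a semisimple group without compact factors can be compact with all elements elliptic (${\rm SO}(2)\subset{\rm SL}(2,\mathbb R)$), and a nontrivial \emph{central} element of $G$ has a singleton conjugacy orbit, so it can never be conjugated out of the ball and must be excluded from $\mathcal B_G$ by a separate estimate. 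More importantly, even where individual conjugacy orbits are unbounded, a conjugation that pushes the currently small elements out of $\mathcal B_G$ may simultaneously pull \emph{other} elements of $\Gamma$ into $\mathcal B_G$; nothing in your argument produces a single $g$ working for all of $\Gamma$ at once, and that simultaneity is precisely the hard content of the Kazhdan--Margulis theorem. Closing it requires an additional idea --- in the standard treatments (Kazhdan--Margulis, Raghunathan's book, Wang's paper) one runs an extremal or inductive argument on the connected nilpotent hull of $\langle g\Gamma g^{-1}\cap\mathcal B_G\rangle$, using a one-parameter subgroup normalizing that hull to strictly decrease a complexity and reach a contradiction. As written, your last paragraph assumes the conclusion it is meant to establish.
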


It follows that the volume of $G/\Gamma$, for any $\Gamma$, is larger than the volume of the $\rho$-ball in $G$ with radius $R_{G}/2$. 

H. C. Wang calculated that for non-compact, non-exceptional Lie groups either $C_1=C_2$ or $C_2=\sqrt{2}C_1$. Theorem~\ref{Ci-thm} extends this result to the exceptional groups and, moreover, shows that with our choice of metric is either $C_1 = C_2 = \sqrt{2}$ or $C_1 = 1 < \sqrt{2} = C_2$. As H. C. Wang found that $R_GC_1 \approx 0.277$ in the former case and $R_GC_1 \approx 0.228$ in the latter case, we take the value of $R_G/2$ to be $0.098$ or $0.114$.

Let $V(d,k,r)$ denote the volume of a ball of radius $r$ in the complete simply connected Riemannian manifold of dimension $d$ with constant curvature $k$. The following comparison theorem is Theorem 3.101 in \cite{Gall}.

\begin{theorem}[Gunther, see \cite{Gall}]\label{Gun} Let $M$ be a complete Riemannian manifold of dimension $d$. For $m\in M$, let $B_m(r)$ be a ball that does not meet the cut-locus of $m$. Let $k$ be an upper bound for the sectional curvatures of $M$. Then, \[\Vol[B_m(r)]\geq V(d,k,r).\]
\end{theorem}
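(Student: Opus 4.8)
The plan is to prove the inequality by passing to geodesic polar coordinates centered at $m$ and comparing the Riemannian volume density of $M$ along radial geodesics with that of the constant curvature model. Since the ball $B_m(r)$ does not meet the cut locus of $m$, the exponential map $\exp_m$ is a diffeomorphism from the Euclidean $r$-ball in $T_mM$ onto $B_m(r)$, and there are no conjugate points along any radial geodesic $\gamma_\theta(t)=\exp_m(t\theta)$ for $t\in(0,r)$ and $\theta$ in the unit sphere $S^{d-1}\subset T_mM$. Writing the volume form as $\mathcal A(t,\theta)\,dt\,d\theta$, where $d\theta$ is the standard measure on $S^{d-1}$ and $\mathcal A(t,\theta)$ is the Jacobian of $\exp_m$, I would record
\[
\Vol[B_m(r)]=\int_{S^{d-1}}\int_0^r \mathcal A(t,\theta)\,dt\,d\theta,
\]
and reduce the theorem to the pointwise density estimate $\mathcal A(t,\theta)\ge s_k(t)^{\,d-1}$, where $s_k$ solves $s_k''+k\,s_k=0$ with $s_k(0)=0$, $s_k'(0)=1$ (so that $s_k(t)^{d-1}$ is the corresponding density of the space form of constant curvature $k$). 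Integrating this estimate over $t$ and $\theta$ and using $V(d,k,r)=\omega_{d-1}\int_0^r s_k(t)^{d-1}\,dt$, with $\omega_{d-1}=\Vol(S^{d-1})$, then yields the claim.

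The heart of the argument is the density estimate, which I would obtain through the matrix Riccati equation for the shape operator $S(t)$ of the geodesic spheres along a fixed $\gamma_\theta$. On the normal space to $\gamma_\theta'$, this operator satisfies $S'+S^2+R_t=0$, where $R_t=R(\,\cdot\,,\gamma_\theta')\gamma_\theta'$ is the curvature operator, together with the asymptotics $S(t)\sim t^{-1}I$ as $t\to 0^+$; moreover $\tfrac{d}{dt}\log\mathcal A(t,\theta)=\tr S(t)$. The hypothesis $K\le k$ translates into the operator inequality $R_t\le k\,I$, since $\langle R_t v,v\rangle$ is the sectional curvature of the plane spanned by $v$ and $\gamma_\theta'$ for unit $v\perp\gamma_\theta'$, while the model shape operator $S_k(t)=\tfrac{s_k'(t)}{s_k(t)}\,I$ satisfies $S_k'+S_k^2+k\,I=0$ with the same initial asymptotics. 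A Riccati comparison lemma then gives $S(t)\ge S_k(t)$ for $t\in(0,r)$, hence
\[
\frac{d}{dt}\log\mathcal A(t,\theta)=\tr S(t)\ge (d-1)\frac{s_k'(t)}{s_k(t)}=\frac{d}{dt}\log\bigl(s_k(t)^{\,d-1}\bigr).
\]
Since $\mathcal A(t,\theta)/s_k(t)^{d-1}\to 1$ as $t\to 0^+$, integrating this differential inequality produces $\mathcal A(t,\theta)\ge s_k(t)^{\,d-1}$, as required.

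The main obstacle is the Riccati comparison step, in two respects. First, one must pin down the correct direction of the inequality: because an upper curvature bound forces geodesics to spread at least as fast as in the model, $R_t\le k\,I$ yields a \emph{lower} bound $S(t)\ge S_k(t)$, and it is essential not to reverse this. Second, the comparison is for a matrix Riccati equation with a singular initial condition at $t=0$, so the argument must be run on the open interval $(0,r)$ and the two solutions matched through their common asymptotic behavior near the origin rather than by prescribing data at a point. The requirement that $B_m(r)$ avoid the cut locus is exactly what guarantees that $S(t)$ remains finite (no conjugate points) throughout $(0,r)$, so that the comparison and the subsequent integration are valid. One could alternatively derive the density estimate from the Rauch comparison theorem applied to the Jacobi fields $J_i(t)$ with $J_i(0)=0$ and $J_i'(0)$ an orthonormal basis of $\gamma_\theta'(0)^\perp$, but the Riccati formulation handles the determinant directly and is the cleaner route.
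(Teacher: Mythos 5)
The paper gives no proof of this statement; it is quoted as Theorem 3.101 of \cite{Gall}, and the proof in that reference is essentially the argument you outline. Your sketch---geodesic polar coordinates, reduction to the radial density estimate $\mathcal A(t,\theta)\ge s_k(t)^{d-1}$, and a Riccati (equivalently Rauch/index-form) comparison with the inequality running in the correct direction, with the no-cut-locus hypothesis invoked exactly where it is needed to rule out conjugate points---is the standard proof and is correct as a sketch; the only implicit point worth recording is that for $k>0$ the comparison requires $s_k>0$ on $(0,r)$, i.e.\ $r\le\pi/\sqrt{k}$, which is satisfied in all of the paper's applications.
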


For fixed, positive values of $d,k$ and $r$, we have by explicit calculation \[V(d,k,r)=\frac{2(\pi/k)^{d/2}}{\Gamma_f(d/2)}\int_0^{rk^{1/2}}\sin^{d-1}\phi\,d\phi.\] Where $\Gamma_f$ represents the \emph{gamma function}. Therefore, a lower bound for the volume of $G/\Gamma$ can be computed by using the dimension of $G$ for $d$, the value $R_G/2$ for $r$, and an upper bound for the sectional curvatures of $G$ for $k$. A bound for sectional curvatures, in terms of $C_1$ and $C_2$, is established in Section~\ref{Sec:Sect}.

\section{Determination of the Constants $C_1$ and $C_2$} \label{Sec:Determine}

In this section we explain how to determine the constants $C_1$ and $C_2$ that are defined and used for curvature estimates in Section~\ref{Sec:Sect}, and then for volume bounds in Section~\ref{Sec:Volume}. The appendix to \cite{Wang69} includes a table of $C_1$ and $C_2$ for the non-compact classical Lie groups. It is therefore necessary to determine these constants for the exceptional Lie groups
as well. In doing so we find that all these constants can be computed in a uniform manner which also explains why their ratio
takes on only two possible values.

By definition, the values of the $C_i$ depend on the Cartan decomposition of the Lie algebra $\mathfrak g$, and on the inner product
$\langle \cdot, \cdot \rangle$ defined in \S 2. Recall that roots are vectors in the dual of the Cartan subalgebra, and we compute their lengths using the bilinear form on the dual of $\mathfrak g^{\mathbb C}$ induced by the Killing form. In this section and the next, we use a renormalized Killing form $B^{\prime}$ to define $\langle \cdot, \cdot \rangle$ where $B^{\prime}$ is the multiple of the Killing form $B$ such that the maximal root of $\mathfrak g^{\mathbb C}$ has length $\sqrt{2}$. This choice reveals a consistency for the values of the $C_i$ that is obscured when the Killing form is used.

\subsection{Type IV}

We discuss first the relatively straight-forward case of type IV irreducible symmetric spaces.

\begin{prop}  \label{type4}
Let $K$ be a compact, simply connected, simple Lie group, and $K^c/ K$ be the corresponding
non-compact symmetric space. Equip $G = K^c$ with the left-invariant metric $\langle \cdot, \cdot \rangle$
described above. Then $C_1 = C_2 = \sqrt{2}$.
\end{prop}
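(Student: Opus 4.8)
The plan is to unwind the definitions of $C_1$ and $C_2$ for a type IV space and reduce everything to a computation on the complexified root system. Recall that a type IV space has the form $G/K = K^{\mathbb C}/K$, where the complex simple Lie group $K^{\mathbb C}$ is viewed as a real Lie group. If $\kf$ is the Lie algebra of the compact group $K$, then the Cartan decomposition of $\g = \kf^{\mathbb C}$ (as a real Lie algebra) is $\g = \kf \oplus \p$ with $\p = i\kf$; the Cartan involution is complex conjugation with respect to the compact real form $\kf$. Thus $\ad(i\kf)$ and $\ad(\kf)$ are intimately related, and by the \textbf{Remark} preceding Theorem~\ref{RG}, I only need to evaluate $N(\ad X)$ on norm-one elements of a maximal abelian subalgebra. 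The crucial structural point is that a maximal abelian subalgebra $\af \subset \kf$ is a Cartan subalgebra $\tf$ of $\kf$, and $\tf^{\mathbb C}$ is a Cartan subalgebra of $\g^{\mathbb C} = \kf^{\mathbb C}\otimes \mathbb C$. Elements $X \in \af$ (for $C_2$) and $iX \in i\af \subset \p$ (for $C_1$) are then diagonalized by the root space decomposition.

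First I would fix a maximal abelian $\af \subset \kf$ and write the root space decomposition of $\kf^{\mathbb C}$ under $\ad(\af^{\mathbb C})$: for $H \in \af$, $\ad H$ acts on the root space $\g^{\mathbb C}_\alpha$ by the scalar $\alpha(H)$, where $\alpha$ runs over the roots. Since for $H \in \af \subset \kf$ the roots take purely imaginary values $\alpha(H) = i\langle \text{something}\rangle$ (the compact real form makes $\ad H$ skew-symmetric), the eigenvalues of $\ad X$ with $X \in \af$ are $\{\pm i\,\alpha(X)\}$ with $\alpha(X)$ real. Then $N(\ad X)$, which is the operator norm with respect to $\langle\cdot,\cdot\rangle$, equals $\max_\alpha |\alpha(X)|$ once I verify that the root space decomposition is orthogonal for the chosen inner product and that $\ad X$ is a normal (indeed skew-adjoint) operator. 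The same computation handles $C_1$: for $iX \in \p$ the operator $\ad(iX) = i\,\ad X$ has eigenvalues $\{\pm \alpha(X)\}$ (now real), and since $\langle\cdot,\cdot\rangle$ is positive definite on $\p$ and the metric differs only by the sign on $\p$ versus $\kf$, the operator norm is again $\max_\alpha |\alpha(X)|$. The punchline is that the computation of both $C_1$ and $C_2$ reduces to the \emph{same} optimization, $\sup\{\max_\alpha |\alpha(X)| : X \in \af,\ \|X\| = 1\}$, which immediately forces $C_1 = C_2$.

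It then remains to evaluate this supremum and show it equals $\sqrt{2}$. Here the normalization chosen in \S\ref{Sec:Determine} does the work: $B'$ is scaled so that the maximal root of $\g^{\mathbb C}$ has length $\sqrt{2}$. With the standard identification of $\af$ (or $\af^{\mathbb C}$) with its dual via $B'$, a norm-one element $X \in \af$ corresponds under this duality to a unit covector, and $\max_\alpha |\alpha(X)|$ is maximized when $X$ points along the direction of a longest root, giving the supremum the value of the length of the longest root, namely $\sqrt{2}$. I would make this precise by writing $\alpha(X) = \langle t_\alpha, X\rangle$ where $t_\alpha$ is the root vector dual to $\alpha$, so that $\max_\alpha|\alpha(X)| \le \max_\alpha \|t_\alpha\|\,\|X\| = \sqrt{2}$ by Cauchy--Schwarz, with equality achieved by taking $X$ parallel to $t_\beta$ for $\beta$ a maximal root.

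The main obstacle I anticipate is the careful bookkeeping of signs and factors coming from the definition of $\langle\cdot,\cdot\rangle$, which is $+B'$ on $\p$ and $-B'$ on $\kf$, combined with the fact that on the compact real form $B'$ is negative definite. I must confirm that with these sign conventions the induced length of $X \in \af$ matches the length of the corresponding covector computed with the dual form on $(\af^{\mathbb C})^*$, so that the normalization ``maximal root has length $\sqrt{2}$'' transfers correctly to the statement $C_1 = C_2 = \sqrt{2}$. The rest is a standard root-system computation, and the equality $C_1 = C_2$ falls out automatically from the observation that passing from $\kf$ to $i\kf = \p$ multiplies the eigenvalues of $\ad X$ by $i$ without changing their absolute values.
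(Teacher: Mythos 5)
Your proposal is correct and follows essentially the same route as the paper: both identify $\g \cong \kf^c$ with $\p = i\kf$, reduce to a maximal abelian (Cartan) subalgebra, read off $N(\ad H)$ from the root-space decomposition of $\kf^c$ (your spectral-theorem phrasing replaces the paper's explicit Weyl--Chevalley basis expansion), and obtain the value $\sqrt{2}$ via Cauchy--Schwarz against the dual root vector, with equality at the maximal root. Your derivation of $C_1 = C_2$ from $\ad(iX) = i\,\ad X$ together with multiplication by $i$ being an isometry is just an unwinding of the paper's statement that the adjoint and isotropy actions of $K$ are equivalent, so there is no substantive difference there either.
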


\begin{proof}
In this case the compact form is $U/K=(K \times K)/\Delta K$. The corresponding Cartan decomposition of $\mathfrak u$ is the sum of the diagonal of $\mathfrak k\oplus\mathfrak k$, with the anti-diagonal in $\mathfrak k\oplus\mathfrak k$. Hence the Lie algebra of the non-compact dual is
\begin{align*}
\mathfrak g =\{(X,X)+i(Y,-Y)|X,Y\in\mathfrak k\}&=\{(X+iY,X-iY)|X,Y\in\mathfrak k\}\\
&=\{(Z,\bar{Z})|Z\in\mathfrak k^c\}\\
&\approx\mathfrak k^c\quad\text{(as a \emph{real} Lie algebra)}\\
&=\mathfrak k \oplus i \mathfrak k.
\end{align*}
Hence, the adjoint and isotropy actions of $K$ are equivalent and we have that $C_1 = C_2$.

To compute $C_1$, we already saw that we only need to consider the norms of operators
${\rm ad}(H)$, where $H$ ranges over norm $1$ elements in a fixed maximal abelian subalgebra $\mathfrak a$ in $\mathfrak p=i\mathfrak k$.
But such an algebra is exactly $i$ times a maximal abelian subalgebra  in $\mathfrak k$, which is
the tangent space of a maximal torus in $K$. In other words, $\mathfrak a$ is just a real Cartan
subalgebra in the complex simple Lie algebra $\mathfrak k^c$. For $H \in \mathfrak a$, the action of ${\rm ad}(H)$
on $\mathfrak g = \mathfrak k^c$ can be read off from a Weyl-Chevalley basis of $\mathfrak k^c$.

In fact, starting from the compact group $K$, with inner product $-B^{\prime}|_K$ and a choice of maximal torus and weak order, one can construct a Weyl-Chevalley basis of the form \[ \{ H_1, \cdots, H_{\ell};  X_{\alpha}, X_{-\alpha}, \alpha \in \Delta^{+} \},\]
where $\Delta^{+}$ is the set of positive roots of $K^c$ and $\ell $ is its rank. This basis has the
following properties:
\begin{enumerate}[(a)]
\item The $H_i \in \mathfrak a$ are orthonormal with respect to $\langle \cdot, \cdot \rangle$;

\item $\{\frac{i}{\sqrt{2}} (X_{\alpha} + X_{-\alpha}), \, \frac{-1}{\sqrt{2}}(X_{\alpha} - X_{-\alpha}),\alpha \in \Delta^{+}  \}$ are orthonormal with respect to $\langle \cdot, \cdot \rangle$;

\item  $ [H, X_{\pm \alpha}] = \pm \alpha(H) X_{\pm \alpha}$;

\item $ [X_{\alpha}, X_{-\alpha}] = H_{\alpha}$, where $H_{\alpha}$ is the element in $\mathfrak a$ dual to $\alpha \in \mathfrak a^*$ with respect to $B^{\prime}$.

\end{enumerate}

Let  $Z = X + i Y \in \mathfrak g$, where $X, Y \in \mathfrak k$ and $\|Z\|=1 $. Then,
\begin{align*}
\|[H, Z]\|^2
&=\langle[H, X + i Y],[H, X + i Y]\rangle\\
&=\langle[H, X] + i[H, Y],[H, X] + i[H, Y]\rangle.
\end{align*}

If we expand $X, Y$ in terms of the Weyl-Chevalley basis and apply (a)-(d), and use the fact that
\[|\langle H_{\alpha},H\rangle|\leq\|H_{\alpha}\|\|H\|,\]
we obtain $ \|[H, Z]\|^2 \leq 2$. This shows that $C_1 \leq \sqrt{2}$. However, the upper bound is
realized by setting $H = \frac{1}{\sqrt 2} H_{\mu}$ and $Z=\frac{i}{\sqrt{2}} (X_{\mu} + X_{-\mu})$ where $\mu$ is the maximal
root of $\mathfrak k^c$. This completes the proof of the proposition.
\end{proof}

We now compare the values of $C_i$ obtained above with those in the first three rows of the appendix in \cite{Wang69}. First, note that the Killing form of $U$ induces on $\Delta \mathfrak k$ $\it twice$ the Killing form of $\mathfrak k$.
On the other hand, for the metric $\langle \cdot, \cdot \rangle$ we use on $\mathfrak g \approx \mathfrak k^c$, it is
the negative of the Killing form on $\mathfrak k$. This accounts for the difference
of a factor of $\sqrt{2}$ between our values of $C_i$ and those in the first three rows of the
table in \cite{Wang69}.

Furthermore, to obtain the values of $C_i$ with respect to the Killing form rather than the renormalized Killing form,
one needs to divide by the constant $\sqrt{\alpha_{\mathfrak g}}$ defined by \[B_{\mathfrak g}=\alpha_{\mathfrak g}B^{\prime}_{\mathfrak g}.\]A table for the normalizing constants $\alpha_{\mathfrak g}$ can be found, for example, in \cite[p. 583]{WZ85}.

With the above adjustments, the values we obtained agree with those (in the first three rows of the appendix) in \cite{Wang69}, except
for a possible typographical error there in the value of $C_1$ for the groups of BD type: the factor of 4 in the denominator should be a $2$ instead.

We now discuss the case of irreducible symmetric spaces of type III in two parts.

\subsection{$\mathbf{C_2}$ in Type III}
\begin{prop}  \label{type3-C2}
Let $G/K$ be an irreducible symmetric space of type III. If $G/K={\rm SO}_0 (3, 1)/{\rm SO}(3)$ the constant $C_2$, with respect to the inner product $\langle \cdot, \cdot \rangle$ defined using the renormalized Killing form, is equal to $1$. For all other cases $C_2=\sqrt{2}$.
\end{prop}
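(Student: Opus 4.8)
The plan is to push the entire computation of $C_2$ over to the compact dual $U/K$, where it becomes a purely root-theoretic maximization. Since $X\in\kf$ and $[\kf,\kf]\subseteq\kf$, $[\kf,\p]\subseteq\p$, the operator $\ad X$ preserves the splitting $\g=\kf\oplus\p$, so $N(\ad X)=\max\{N(\ad X|_{\kf}),\,N(\ad X|_{\p})\}$. I would first observe that the map $Y\mapsto iY$ carrying $\p$ onto $i\p\subset\uf$ is an isometry for the relevant restrictions of $\langle\cdot,\cdot\rangle$: because $B^{\prime}$ is $\mathbb C$-bilinear on $\g^{\mathbb C}$ we have $B^{\prime}(iY,iY)=-B^{\prime}(Y,Y)$, whence $\langle iY,iY\rangle=\langle Y,Y\rangle$. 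Moreover this map intertwines $\ad X|_{\p}$ with $\ad X|_{i\p}$ since $\ad X([iY])=i\,\ad X(Y)$. As $\ad X|_{\kf}$ is literally the same operator in $\g$ and in $\uf$, this yields $N_{\g}(\ad X)=N_{\uf}(\ad X)$ for every $X\in\kf$, so $C_2$ may be computed inside the compact form.

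By the Remark it suffices to let $X$ range over the unit sphere of a fixed maximal abelian subalgebra $\tf\subset\kf$. I would extend $\tf$ to a maximally compact Cartan subalgebra $\h=\tf\oplus\bmf$ of $\uf$, where $\bmf=\h\cap i\p$ has dimension $\mathrm{rank}\,\uf-\mathrm{rank}\,\kf$. For $X\in\uf$ the operator $\ad X$ is skew-symmetric with respect to the positive-definite form $-B^{\prime}|_{\uf}$, so its operator norm equals its spectral radius; relative to $\h^{\mathbb C}$ the root spaces of $\g^{\mathbb C}$ diagonalize $\ad X$ with purely imaginary eigenvalues $\alpha(X)$. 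Hence $N_{\uf}(\ad X)=\max_{\alpha}|\alpha(X)|$, and maximizing over unit $X\in\tf$ gives the clean formula $C_2=\max_{\alpha}\|\alpha|_{\tf}\|$, the largest $B^{\prime}$-length of the orthogonal projection to $\tf^{*}$ of a root $\alpha$ of $\g^{\mathbb C}$. Since $\|\alpha|_{\tf}\|\le\|\alpha\|\le\sqrt2$ by the normalization of $B^{\prime}$, this immediately gives $C_2\le\sqrt2$.

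It remains to decide when equality holds, i.e.\ whether some maximal-length root satisfies $\alpha|_{\bmf}=0$ (equivalently $\alpha\in\tf^{*}$). When $U/K$ has equal rank we have $\bmf=0$, every root lies in $\tf^{*}$, and $C_2=\max_{\alpha}\|\alpha\|=\sqrt2$ at once; this disposes of all equal-rank type III spaces (including $\mathbf H^{2}$, where the sole positive root is a noncompact isotropy weight of full length $\sqrt2$). The substantive work is the non-equal-rank type III families, where I would show a maximal-length root still lies in the centralizer of $\bmf$ — equivalently, that the longest root of $\kf$ or the longest isotropy weight already attains length $\sqrt2$ — by running through the short classification list ($\mathbf H^{2m+1}$, $SU(n)/SO(n)$, $SU(2n)/Sp(n)$, and the pertinent exceptional pairs).

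The main obstacle, and the source of the single exception, is exactly this last step: one must verify that in every non-equal-rank case other than $SO_{0}(3,1)/SO(3)$ there is a long root orthogonal to $\bmf$. I expect the cleanest calibration to be $\mathbf H^{n}=SO_{0}(n,1)/SO(n)$, where $\g^{\mathbb C}=\so(n+1,\mathbb C)$; the non-equal-rank subcase is $n$ odd, of type $D_{(n+1)/2}$, with $\tf$ obtained by omitting one coordinate direction. For odd $n\ge5$ the roots $e_{1}\pm e_{2}$ survive the restriction with full length $\sqrt2$, so $C_2=\sqrt2$; but at $n=3$ the algebra degenerates to $D_{2}=\so(4,\mathbb C)$, whose roots $\pm e_{1}\pm e_{2}$ all involve the omitted coordinate and hence restrict to $\tf=\mathbb R e_{1}$ with length only $1$. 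This collapse — caused by $\g^{\mathbb C}$ failing to be simple and admitting no long root supported on $\tf$ — is precisely why $C_2=1$ for $SO_{0}(3,1)/SO(3)$ while $C_2=\sqrt2$ in all remaining type III cases.
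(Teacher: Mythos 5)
Your core reduction is correct, and it organizes the computation genuinely differently from the paper. The paper also passes to the compact dual $U/K$ (justified by the isomorphism of the isotropy representations of a dual pair), but then treats $\g=\kf\oplus\p$ as the sum of the adjoint and isotropy representations of $K$, obtaining $C_2=\max(|\mu|,|\lambda|)$ where $\mu$ is the maximal root of $\kf^{\mathbb C}$ and $\lambda$ the dominant isotropy weight, both measured with the renormalized Killing form of $\uf$; in the unequal-rank cases it must then invoke the Dynkin index $[\uf:\kf]$ to convert lengths between the two normalizations. Your formulation --- a single root system of $\g^{\mathbb C}$ relative to a maximally compact Cartan subalgebra $\h=\tf\oplus\bmf$, with $C_2=\max_{\alpha}\|\alpha|_{\mathfrak t}\|$ and with $C_2=\sqrt2$ holding exactly when some long root vanishes on $\bmf$ --- absorbs that index bookkeeping into one geometric criterion, and your explicit isometric intertwining $(U,Y)\mapsto(U,iY)$ justifies the passage to the dual more rigorously than the paper does. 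The equal-rank argument and the $\mathbf H^n$ computation, including the $D_2$ degeneration that forces $C_2=1$ for ${\rm SO}_0(3,1)/{\rm SO}(3)$, are correct and agree with the paper.

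The gap is the step you yourself flag and then defer: ``running through the short classification list'' is in fact most of the proof, and you only run it for odd-dimensional $\mathbf H^n$. The unequal-rank type III spaces also comprise (in compact dual form) ${\rm SU}(n)/{\rm SO}(n)$, $n\geq 3$; ${\rm SU}(2n)/{\rm Sp}(n)$, $n\geq 2$; ${\rm E}_6/({\rm Sp}(4)/\Z_2)$; ${\rm E}_6/{\rm F}_4$; and --- missing from your enumeration altogether --- the Grassmannian family ${\rm SO}(2p+2q+2)/({\rm SO}(2p+1)\,{\rm SO}(2q+1))$ with $p\geq q\geq 1$. For these your criterion is not self-verifying: for instance, for ${\rm SU}(n)/{\rm SO}(n)$ with $n\geq 5$ the index $[\su(n):\so(n)]=2$ makes every root of $\so(n,\C)$ have length $1$ in the ambient normalization, so the long root you need must come from the isotropy weights; one has to write out how the torus of ${\rm SO}(n)$ sits inside that of ${\rm SU}(n)$ (with $\mathfrak t$ consisting of matrices $i\,{\rm diag}(t_1,-t_1,t_2,-t_2,\dots)$, the root $x_1-x_2$ of $\sln(n,\C)$ vanishes on $\bmf$ and restricts to the weight the paper calls $2x_1$). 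Analogous explicit checks are needed for each remaining family --- this is precisely where the paper spends the bulk of its proof, using maximal roots, dominant weights, and indices --- and until they are carried out, the assertion that $C_2=\sqrt2$ in \emph{all} cases other than ${\rm SO}_0(3,1)/{\rm SO}(3)$ is not established.
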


\begin{proof}
Assume $G/K\neq{\rm SO}_0 (3, 1)/{\rm SO}(3)$. We fix a maximal abelian subalgebra $\mathfrak t \subset \mathfrak k$ and a weak ordering in $\mathfrak t$. As before, it is
enough to consider the norms of ${\rm ad}(H)$ for norm $1$ elements $H \in \mathfrak t$. Now ${\rm Ad}_K$ acts on $\g = \kf \oplus \p$ as the direct sum of the adjoint and isotropy representations, the latter of which is irreducible over $\R$. If an $\R$-irreducible summand in these
representation split upon complexification, the resulting complex irreducible summands are uniquely characterized by a pair of maximal roots $\mu$ (resp. dominant weights $\lambda$) which have the same norm.  Since a dual pair of irreducible symmetric spaces have isomorphic isotropy representations, it follows that we can work with the type I compact dual spaces $U/K$ as far as $C_2$ is concerned.

For the adjoint representation of $\kf$ we can choose a Weyl-Chevalley basis as in Proposition \ref{type4}.
One sees that the nonzero eigenvalues of ${\rm ad}(H)$ occur among the values $\pm  \alpha(i H)$.
For the $\p$-part we can take a $B^{\prime}$-orthonormal set $\{ w_{\rho}^{\prime}, w_{\rho}^{\prime \prime} \}$
in which $\rho$ ranges over all the positive weights of the isotropy representation. We can then define
the complex weight vectors $v_{\pm \rho} = \frac{1}{\sqrt{2}} (w_{\rho}^{\prime} - (\pm  i w_{\rho}^{\prime \prime}))$
which satisfy $[H, v_{\pm \rho}] = \pm \rho(H) v_{\pm \rho}$. Thus the nonzero eigenvalues of ${\rm ad}(H)$
occur among the values  $ \pm  \rho(i H)$.  It follows from this that if $X\in\mathfrak k$ and $Y\in\mathfrak p$ such that
$|X + Y| = 1$, then $ | [H, X + Y] |^2 \leq {\rm max}(|\mu|^2, |\lambda|^2).$ As in the proof
of Proposition \ref{type4}, both upper bounds are easily seen to be attained when $H = \frac{-i}{|\rho|} H_{\rho}$
for $\rho = \mu $ (resp. $\lambda$).

Note that the norms $|\mu|$ and $|\lambda|$ are computed using the renormalized Killing form of $U$
and not that of $K$. When $U$ and $K$ have the same rank, then the roots of $K$ and the weights of the isotropy
representation are all roots of $U^c$. Hence one of $|\mu|$ or $|\lambda|$ equals $\sqrt{2}$ and we are
done. It therefore remains for us to check if these norms equal $\sqrt{2}$ for $U/K$ with
${\rm rank}\, U > {\rm rank}\, K$.

Recall that the {\em index} of a compact simple Lie subalgebra $\kf$ of a compact simple Lie algebra $\uf$ is the positive integer $[\mathfrak u:\mathfrak k]$ so that $B^{\prime}_{\mathfrak u}= [\mathfrak u:\mathfrak k]B^{\prime}_{\mathfrak k}$. The index allows us to compare norms defined using the renormalized Killing forms of $U$ and $K$. We refer the reader to \cite{Dynkin} for details and p. 584 of \cite{WZ85} for a summary of the pertinent facts that we will use.

Among the type I symmetric spaces $U/K$ with unequal rank and simple $K$; ${\rm SU}(2n)/{\rm Sp}(n), n \geq 2$, ${\rm E}_6/({\rm Sp}(4)/\Z_2) $, and ${\rm E}_6/{\rm F}_4$, the index $[\mathfrak u:\mathfrak k] = 1$. To analyze these cases we will use the standard parametrizations of the root systems of $K$, and knowledge of the isotropy representation (see for example pp. 324-325 of \cite{WZ93}). In the first case, $\mu = 2x_1$ and $\lambda = x_1 + x_2$, so $|\lambda| < |\mu| = \sqrt{2}$. In the second case, $\mu = 2x_1$ while $\lambda = x_1 + x_2 + x_3 + x_4$, and so $|\lambda| = |\mu| = \sqrt{2}.$ In the third case, $\lambda = x_1$ which is shorter than the maximal root $x_1 + x_2$ of ${\rm F}_4$.

Two cases remain: $U/K = {\rm SU}(n)/{\rm SO}(n), n \geq 3$ and
${\rm SO}(2p + 2q +2)/({\rm SO}(2p+1) {\rm SO}(2q+1))$ with $p \geq q \geq 1$ or $p > q = 0$.
(Note that the $n=4$ subcase of the former is the same as the $p=q=1$ subcase of the latter.)
In the first case, the maximal root of ${\rm SO}(n)$ is $x_1 + x_2$, which is shorter than the
dominant weight $2x_1$ of the isotropy representation. However, $[\su(n):\so(n)] =2$, so $2x_1$
has norm $\sqrt{2}$ with respect to the renormalized Killing form of ${\rm SU}(n)$. One needs to
treat the special cases $n=4$ ($\so(4)$ is non-simple) and $n=3$ ($[\su(3):\so(3)] = 4$) separately
but the conclusion is the same.

There are a number of special cases within the second case. If $p \geq q \geq 2$, both simple
factors in $K$ have index $1$. There are two maximal roots $x_1 + x_2$ and $y_1 + y_2$ both
of norm $2$ with respect to the renormalized Killing form. The dominant weight of the isotropy
representation is $x_1 + y_1$ which also has norm $2$. So $C_2 = \sqrt 2$. If $p > q = 0$,
$U/K$ is $S^{2p+1}$, the dual of hyperbolic space, which is treated in \cite{AdeWei12}. With the current point of view,
assuming $p > 1$, we have $[\so(2p+2): \so(2p+1)] =1$. The dominant weight of the isotropy
representation is $x_1$, which is shorter than the maximal root $x_1 + x_2$ of $\so(2p+1)$
of length $\sqrt{2}$. Again we have $C_2 = \sqrt 2$.  For the remaining cases,
either $p$ or $q$ equals $1$. The differences are that $[\so(m):\so(3)] = 2 $ for $m \geq 4$
and the nonzero roots of $\so(3)$ are $\pm 2x$. With these changes one still gets $C_2 = \sqrt{2}$.

Finally, the case of hyperbolic $3$-space, $\mathbf H^3={\rm SO}_0 (3, 1)/{\rm SO}(3)$ with corresponding compact dual $U/K = {\rm SO}(4)/{\rm SO}(3)$ is special because $[\so(4):\so(3)] = 2$ and the maximal root of $\so(3)$ has length only $1$ with respect to the renormalized Killing form of $\so(4)$. Thus $C_2 = 1$ in this special case, as noted in \cite{AdeWei12}. It is also interesting to note
that $\mathbf H^3$ can be written as the type IV symmetric space ${\rm PSL}(2, \C)/{\rm SO}(3)$.
The difference in the values of $C_2$ is due to the difference in the renormalized Killing forms.
\end{proof}

\begin{remark} \label{C2exceptions}{~}
\begin{enumerate}[(i)]
\item The case of hyperbolic $2$-space is also special. The compact form is ${\rm SO}(3)/{\rm SO}(2)$,
so $K$ is abelian and has no nonzero roots. However, ${\rm rank} \, U = {\rm rank}\, K$, so $C_2 = \sqrt{2}$
with respect to the renormalized Killing form of $\so(3)$. Note that in \cite{AdeWei12} the metric used in
defining $C_2$ is twice the renormalized Killing form of $\so(3)$, so the corresponding value of $C_2$ becomes $1$.
\item For complex hyperbolic space, the compact dual is projective space
${\rm SU}(n+1)/{\rm S}({\rm U}(n) {\rm U}(1))$. In \cite{AdeWei14}, the metric used to define $C_2$
is twice the renormalized Killing form of $\su(n+1)$, so $C_2 = 1$ instead.
\end{enumerate}
\end{remark}

\subsection{$\mathbf{C_1}$ in Type III}

So far we have only made use of the structural and representation theory of compact or complex semisimple
Lie groups. We now appeal to elements of the theory of restricted roots for symmetric
spaces. First, we reintroduce some notation and recall basic facts from pp. 257 - 263 of \cite{Helgason}.

Given a type III symmetric space $G/K$ with involution $\theta$ and Cartan decomposition $\g = \kf \oplus \p$,
we fix a maximal abelian subalgebra $\af$ of $\g$ lying in $\p$. As remarked before, we consider
the norms of ${\rm ad}(H)$ for elements $H \in \af$ of norm $1$ with respect to the
metric $\langle \cdot, \cdot \rangle$ defined using the renormalized Killing form of $\g$.

Denote by $\h_{0}$ a fixed extension of $\af$ to a maximal abelian subalgebra of $\g$. The
$\theta$-invariant algebra $\h_{0}$ can be decomposed as $ \tf \oplus \af$ where $ \tf$ is a
maximal abelian subalgebra in $\kf$. The complexification $\h^c$ of $\h_{0}$ is a Cartan subalgebra
of the complex Lie algebra $\g^c$ and $\h_{\R} = i \tf \oplus \af$ is the corresponding real
Cartan subalgebra. The roots of $\g^c$ are regarded as elements of the dual of $\h_{\R}$.
We will also fix a compatible weak ordering of elements in $\h_{\R}$ and $\af$ to get a
consistent notion of positivity for roots and restricted roots.

Let $U/K$ be the dual compact form of $G/K$. Then $\uf = \kf \oplus i\p$ and the maximal
abelian subalgebra $\tf \oplus i \af$ is the Lie algebra of a maximal torus $T$ in $U$. As in the
proof of Proposition \ref{type4}, we choose a Weyl-Chevalley basis of $\g^c$ having the stated
properties there.

Let $\Delta^{+}$ denote the
positive roots of $\g^c$. Upon restriction to $\af \subset \h_{\R}$, some of the roots will
vanish. These are often called the {\it compact roots} and they are characterized by being fixed by
the action of the Cartan involution $\theta$. Those roots which do not vanish identically
may be called {\it noncompact roots} and their restrictions to $\af$, viewed as elements of $\af^*$,
are called {\it restricted roots}. We let $P^{+}$ (resp. $\Sigma^{+}$) denote the
set of positive noncompact (resp. restricted) roots. Their relevance to the determination of
$C_1$ can now be explained.

Choose $H \in \af$ of norm $1$. By Lemma VI.1.2 in \cite{Helgason}, ${\rm ad}(H)$ is self-adjoint
with respect to the metric $\langle \cdot, \cdot \rangle$, which is denoted by $B_{\theta}$ in \cite{Helgason}.
The norm of ${\rm ad}(H)$ as an operator on $\g$ is the absolute value of its largest eigenvalue.
We can analyse its eigenvalues by looking at its complex extension to $\g^c$, where we can use
our Weyl-Chevalley basis. It follows that
\begin{equation} \label{char-C1}
 N({\rm ad}(H)) = {\rm max} \{ |\alpha(H)|: \alpha \in P^{+} \} =
    {\rm max} \{ |\bar{\alpha}(H)|: \bar{\alpha} \in \Sigma^{+} \}.
\end{equation}

The constant $C_1$ is therefore equal to the supremum of these norms over the norm one elements of $\af$.
An upper bound for $C_1$ is clearly $\sqrt{2}$ since we are using the renormalized Killing form.
But $ C_2=\sqrt{2}$ except in the case of hyperbolic $3$-space. Thus $C_1 \leq C_2$ and in the case of hyperbolic $3$-space we have $C_1 = C_2 = 1$, for example by \cite{AdeWei12}.

We can now go through the list of type III symmetric spaces, examine their restricted root systems, and
deduce the values of $C_1$. A convenient source for this purpose is Table VI, pp. 532 - 534 of \cite{Helgason}.
We will exclude the special case of hyperbolic $3$-space in the following discussion.

\begin{observation}\label{Ob} Suppose $\alpha \in P^{+}$ and we know that the dual element
$H_{\alpha} \in \af$. If in addition $\alpha$ is a long root of $\g$ (this is automatic if $\g$ is
of ADE type), then $C_1 = \sqrt{2} = C_2$ since $ \alpha(\frac{H_{\alpha}}{|\alpha|} ) = |\alpha|$.
If all elements of $P^{+}$ are short roots, then $C_1 = 1 < \sqrt{2} = C_2$.
\end{observation}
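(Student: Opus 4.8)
The plan is to turn the characterization \eqref{char-C1} into a statement purely about the lengths of restricted roots and then read off the two cases. Starting from \eqref{char-C1}, for a fixed $\bar\alpha \in \Sigma^{+}$ the Cauchy--Schwarz inequality gives
\[
\sup_{H \in \af,\ \|H\|=1} |\bar\alpha(H)| = |\bar\alpha|,
\]
the supremum being attained at $H = H_{\bar\alpha}/\|H_{\bar\alpha}\|$. Interchanging the finite maximum over $\Sigma^{+}$ with the supremum over the unit sphere of $\af$ then yields
\[
C_1 = \sup_{H \in \af,\ \|H\|=1}\ \max_{\bar\alpha \in \Sigma^{+}} |\bar\alpha(H)| = \max_{\bar\alpha \in \Sigma^{+}} |\bar\alpha| ,
\]
so that $C_1$ is just the length of a longest restricted root in the renormalized metric. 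I would pair this with the elementary remark that $\bar\alpha$ is the orthogonal projection to $\af^{*}$ of the noncompact root $\alpha$ it comes from, whence $|\bar\alpha| \le |\alpha|$, with equality exactly when $\alpha$ vanishes on $i\tf$, i.e. when $H_{\alpha} \in \af$. Since the renormalization forces every root of $\g^{c}$ to have length at most $\sqrt 2$, this already recovers the bound $C_1 \le \sqrt 2$ quoted before the statement.

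For the first assertion the work is then essentially done. If $\alpha \in P^{+}$ is long with $H_{\alpha} \in \af$, then equality $|\bar\alpha| = |\alpha|$ holds, and $|\alpha| = \sqrt 2$ because all long roots share the length of the maximal root (this length condition is automatic in the ADE case). Hence $C_1 = \max_{\bar\alpha}|\bar\alpha| \ge \sqrt 2$, and together with $C_1 \le \sqrt 2$ and $C_2 = \sqrt 2$ from Proposition \ref{type3-C2} this gives $C_1 = \sqrt 2 = C_2$. Concretely the supremum is realized at the unit vector $H = H_{\alpha}/|\alpha| \in \af$, where $\alpha(H) = |\alpha| = \sqrt 2$, exactly as indicated in the statement.

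For the second assertion the upper bound is immediate from the displayed formula: if every $\alpha \in P^{+}$ is short, then $|\bar\alpha| \le |\alpha| = 1$ for each restricted root, so $C_1 \le 1$. The whole content lies in the reverse inequality $C_1 \ge 1$, that is, in producing a restricted root of length exactly $1$; by the projection relation this means exhibiting a short noncompact root $\alpha$ with $H_{\alpha} \in \af$ (a \emph{real} root, in restricted-root terminology), for then $|\bar\alpha| = |\alpha| = 1$. I expect this existence step to be the main obstacle, since it is precisely where one must use more than the formal inequality $|\bar\alpha| \le |\alpha|$. My approach would be to take the highest restricted root $\bar\mu$, write any noncompact $\alpha$ restricting to it via $\|\bar\alpha\|^{2} = \tfrac12\big(\|\alpha\|^{2} - \langle \alpha, \theta\alpha\rangle\big)$, and use the maximality of $\bar\mu$ to force $\langle \alpha, \theta\alpha\rangle = -\|\alpha\|^{2}$, i.e. $\theta\alpha = -\alpha$, so that $\|\bar\mu\| = 1$ and no longer restricted root can occur. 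In practice I would confirm both halves of this — that some real root is short and that no restricted root is longer — by running through the restricted root systems of the type III spaces tabulated in Table VI of \cite{Helgason}, which is exactly the case analysis the Observation is built to feed.
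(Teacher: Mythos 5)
Your reduction via Equation~(\ref{char-C1}) to the identity $C_1 = \max_{\bar\alpha \in \Sigma^{+}} |\bar\alpha|$, the projection relation $|\bar\alpha| \le |\alpha|$ with equality precisely when $H_{\alpha} \in \af$, and your handling of the long-root case are all correct and coincide with the paper's (largely implicit) argument. The gap is in your second case, and it stems from a misreading of the statement: the opening clause ``Suppose $\alpha \in P^{+}$ and we know that the dual element $H_{\alpha} \in \af$'' is a standing hypothesis that governs \emph{both} assertions, not only the first. Under it there is nothing left to construct: if all of $P^{+}$ is short, then the hypothesized $\alpha$ is a short \emph{real} root, and the same one-line evaluation used in the long-root case, $\alpha(H_{\alpha}/|\alpha|) = |\alpha| = 1$, gives $C_1 \ge 1$; combined with your (correct) upper bound $C_1 \le 1$, the proof is complete. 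The existence of a real noncompact root is deliberately packaged as a hypothesis of the Observation; it is what gets verified space by space in the proof of Theorem~\ref{Ci-thm} (normal real forms, the odd-multiplicity criterion F3 of \cite{Helgason}, long roots with coroot in $\af$ for BI, etc.), which is exactly why the Observation is stated this way.

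Moreover, the mechanism you propose to fill the gap you thought existed is false: the highest restricted root need not be the restriction of a real root, and maximality does not force $\langle\alpha,\theta\alpha\rangle = -\|\alpha\|^{2}$. Take $\g = \so(2p+1,1)$, of type $D_{p+1}$: with $\af^{*}$ spanned by $x_1$, the unique positive restricted root is $\bar\mu = x_1$, and the roots restricting to it are $x_1 \pm x_j$, $j \ge 2$, which satisfy $\theta(x_1 \pm x_j) = -(x_1 \mp x_j)$; hence $\langle \alpha, \theta\alpha\rangle = 0$, $|\bar\mu| = 1 < \sqrt{2} = |\alpha|$, and there is no real root at all. (All roots of $D_{p+1}$ are long, so this does not contradict the Observation --- its standing hypothesis simply fails for this space, which the paper treats separately --- but it does refute your claimed implication.) Finally, your fallback of confirming the needed existence by ``running through Table VI'' inverts the logic of the paper: Observation~\ref{Ob} is the lemma that the Table VI case analysis in the proof of Theorem~\ref{Ci-thm} invokes, so proving the Observation by that case analysis would make it circular and deprive it of its purpose.
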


\begin{theorem} \label{Ci-thm}
Let $G/K$ be a simply connected irreducible symmetric space of non-compact type other than
hyperbolic $3$-space. Equip $G$ with the left-invariant metric induced by the renormalized Killing form.
Let $C_1, C_2$ be the constants defined in Section \ref{Sec:Cs}. Then either $C_1 = C_2 = \sqrt{2}$
or $C_1 = 1 < \sqrt{2} = C_2$. The latter occurs exactly when $G/K$ is one of the following:
\begin{enumerate}
\item  A rank $1$ symmetric space other than $\mathbf H^2$, $\mathbf H^3$, or $\mathbb C\mathbf H^n$,  for $n \geq 2$;
\item ${\rm SU}^*(2n)/{\rm Sp}(n)$, $n \geq 2$;
\item   ${\rm Sp}(m+n)/({\rm Sp}(m) {\rm Sp}(n)),  m \geq n \geq 2$;
\item   or, ${\rm E}_{6(-26)}/{\rm F}_4$.
\end{enumerate}
\end{theorem}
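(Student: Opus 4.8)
The plan is to combine what is already proved with a case-by-case reading of restricted root systems. By Proposition~\ref{type4}, every type~IV space has $C_1 = C_2 = \sqrt2$ and so satisfies the first alternative; it remains only to treat type~III spaces other than $\mathbf H^3$. For these, Proposition~\ref{type3-C2} gives $C_2 = \sqrt2$, and the discussion preceding Observation~\ref{Ob} gives $C_1 \le \sqrt2$. Thus $C_1 \le C_2 = \sqrt2$ in every case, and the whole theorem reduces to deciding, space by space, whether $C_1 = \sqrt2$ or $C_1 = 1$.

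The decisive tool is the identity~\eqref{char-C1}, which identifies $C_1$ with the length of the longest restricted root, measured in the renormalized Killing form in which the maximal root of $\g^c$ has length $\sqrt2$. Since orthogonal projection onto $\af$ cannot lengthen a covector, every restricted root satisfies $|\bar\alpha| \le |\alpha| \le \sqrt2$, and $|\bar\alpha| = \sqrt2$ is possible only when $\alpha$ is a long root whose dual $H_\alpha$ already lies in $\af$, i.e. a noncompact root vanishing on $\tf$. This gives the operative criterion: $C_1 = \sqrt2$ exactly when some long root of $\g^c$ has its dual in $\af$ (equivalently, restricts isometrically); otherwise the longest restricted root is short and $C_1 = 1$. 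Observation~\ref{Ob} records the two outcomes, and I would use~\eqref{char-C1} directly wherever a projection must be computed.

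With the criterion fixed I would go through the type~III spaces and their restricted root systems using Table~VI of~\cite{Helgason}. For the split forms---${\rm SL}(n,\R)/{\rm SO}(n)$, ${\rm Sp}(n,\R)/{\rm U}(n)$, and the split orthogonal and exceptional forms---the restricted root system is that of $\g^c$ itself, a long root restricts isometrically, and $C_1 = \sqrt2$. More generally $C_1 = \sqrt2$ whenever one can exhibit a single long root whose dual lies in $\af$; this disposes of the remaining non-exceptional families, for instance ${\rm SU}(p,q)$ and ${\rm SO}_0(p,q)$ with $q \ge 2$, whose restricted systems contain long roots (of type $C_q$, $B_q$, or $D_q$) of full length $\sqrt2$, as well as the low-rank cases $\mathbf H^2$ and $\mathbb C\mathbf H^n$, which must be checked individually and also yield $\sqrt2$. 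The exceptional list consists precisely of the spaces where this criterion fails: in each, the Cartan involution places the flat $\af$ so that \emph{every} long root of $\g^c$ either vanishes on $\af$ or acquires a nonzero component along $\tf$, so that no long root restricts isometrically and the longest restricted root has length $1$. Checking the tables, these are exactly the rank-one spaces $\mathbf H^n$ ($n \ge 4$), $\mathbb H\mathbf H^n$, and $\mathbb O\mathbf H^2$; the quaternionic Grassmannians ${\rm Sp}(m+n)/({\rm Sp}(m){\rm Sp}(n))$; ${\rm SU}^*(2n)/{\rm Sp}(n)$; and ${\rm E}_{6(-26)}/{\rm F}_4$, matching the statement.

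The main obstacle is the verification in the simply-laced exceptional cases---${\rm SU}^*(2n)/{\rm Sp}(n)$, ${\rm E}_{6(-26)}/{\rm F}_4$, and the odd-dimensional $\mathbf H^n$---where $\g^c$ is of ADE type so that \emph{all} roots are long, creating the false expectation $C_1 = \sqrt2$. Here one cannot appeal to any short/long dichotomy in $\g^c$; instead one must locate $\af$ explicitly (for ${\rm SU}^*(2n)$ it lies along the quaternionic index pairing $(2k-1,2k)$), verify that no long root is orthogonal to $\tf$, and compute from~\eqref{char-C1} that each noncompact root projects to a restricted root of length exactly $1$ rather than some intermediate value. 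A related subtlety worth isolating is the contrast between ${\rm Sp}(m+n)/({\rm Sp}(m){\rm Sp}(n))$ and ${\rm SO}_0(p,q)$ with $q \ge 2$: both have rank $\ge 2$ with a $C$- or $B$-type restricted system, yet only the former is exceptional, the difference being solely whether a long root of $\g^c$ can be made to vanish on $\tf$.
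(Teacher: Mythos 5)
Your proposal is correct and follows essentially the same route as the paper's proof: reduce to type III via Propositions~\ref{type4} and~\ref{type3-C2}, identify $C_1$ with the length of the longest restricted root via Equation~(\ref{char-C1}) and the criterion of Observation~\ref{Ob}, then run through Helgason's Table~VI, exhibiting long roots dual to elements of $\af$ for the split and classical families and doing explicit computations in the flat for the exceptional list. The only notable difference is that where you propose hunting for a long real root case by case, the paper produces such roots uniformly in the ADE-type non-split cases by citing the odd-multiplicity fact (F3, p.~530 of \cite{Helgason}); and the obstacle you correctly flag---that when no long root restricts isometrically one must still verify the longest restricted root has length exactly $1$ rather than an intermediate value such as $\sqrt{3/2}$---is exactly what the paper's explicit computations for FII, EIV, AII, and CII settle, so your plan and the paper's proof coincide in substance.
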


\begin{proof}
The first case to which Observation~\ref{Ob} applies is that of normal real forms. This means that
the maximal abelian subalgebra $\af$ is actually maximal abelian in $\g$, and so can be taken as a
real Cartan subalgebra of $\g^c$. In particular, $\Delta^{+} = P^{+}$ so it always contains the
maximal root. Hence $C_1 = C_2 = \sqrt{2}$. The entries in Table VI of \cite{Helgason} belonging to
this case are:  AI, $r \geq 2$; BI, $3 \leq r = \ell$; CI, $r \geq 2$; DI, $\ell \geq 4$; EI; EV; EVII; FI; and G. Note that ${\rm SO}(3)/{\rm SO}(2)$, corresponding to hyperbolic $2$-space or complex hyperbolic $1$-space, also belongs here as $K$ has no non-zero roots.

Observation~\ref{Ob} is also applicable when $\g$ is of ADE type and there is a restricted
root whose multiplicity is odd, by F3 on p. 530 of \cite{Helgason}. Besides the normal real forms we
have AIII, $r \geq 4$; DI, $r \geq 4$; DIII, $r \geq 3$; EII; EIII; EVI; EVII; and EIX.
In all these cases, again $C_1 = C_2 = \sqrt{2}$. The same conclusion also holds for the case
BI: $ {\rm SO}_0(p, q)/{\rm SO}(p) {\rm SO}(q) $, $p+q$ odd, $p \geq q \geq 2$, since there
are restricted roots which come from long roots whose dual coroot lies in $\af$.

For the case of the Cayley plane ${\rm F}_{4(-20)}/{\rm Spin}(9)$, take as root system for ${\rm F}_4$
\[\{ \pm x_i, 1 \leq i \leq 4; \, \pm x_i \pm x_j, 1 \leq i \neq j \leq 4; \,
     \frac{1}{2}(\pm x_1 \pm x_2 \pm x_3 \pm x_4) \}.\]
A system of positive simple roots is $\{x_2 - x_3, x_3 - x_4, x_4, \frac{1}{2}(x_1 - x_2 - x_3 - x_4) \}$
(see \cite{samelson}, p. 80,  for this choice). In the Satake diagram, a blackened vertex means the
corresponding root restricts to zero for the chosen maximal abelian subalgebra $\af$. So $\af \approx \R$ is
given by $x_2 = x_3 = x_4 = 0$, i.e., $\af^*$ is spanned by the short root $x_1$. The
restricted roots are $\{ x_1, \frac{1}{2}x_2 \}$ with respective multiplicities of $7$ and $8$.
The restriction of the renormalized Killing form on our real Cartan subalgebra is just the Euclidean
inner product. With $H = e_1$ and restricted root $\bar{\alpha}$, we have $\bar{\alpha}(H) = 1$ or $\frac{1}{2}$.
Hence by Equation~(\ref{char-C1}), $C_1 = 1 < \sqrt{2} = C_2$.

For the split rank case of EIV, ${\rm E}_{6(-26)}/{\rm F}_4$, we use the root system for ${\rm E}_6$ given on p. 80 of \cite{samelson}. The positive roots are
\[ \{ x_i - x_j; \, x_i + x_j + x_k; \, x_1 + \cdots + x_6\}, \quad 1 \leq i < j < k \leq 6,\]
with fundamental system
\[\{ x_i - x_{i+1}; \, x_4 + x_5 + x_6 \}, \quad 1 \leq i \leq 5.\]
The renormalized Killing form is $\sum \, x_i^2  + \frac{1}{3} \left( \sum  \, x_i  \right)^2.$
From the Satake diagram we see that $\mathfrak a$ is defined by
\[ x_2 = x_3 = x_4 = x_5, x_4 + x_5 + x_6 = 0, \]
so that $\af = \{ (t_1, t_2, t_2, t_2, t_2, -2t_2) : t_1, t_2 \in \R \}$. For elements of
$\af$ the renormalized Killing form becomes $\frac{4}{3} (t_1^2 + t_1 t_2 + 7 t_2^2)$. The restricted
roots are $\{ t_1 - t_2 ; \, t_1 + 2t_2 ; \, 3t_2  \}$, all with multiplicity $8$.
One then deduces that the maximum value of these linear forms subject to the norm $1$ condition
is $1$ in all cases. Thus $C_1 = 1 < \sqrt{2} = C_2$.

The remaining symmetric spaces of type III that need to be examined are
AII: ${\rm SU}^*(2n)/{\rm Sp}(n)$, $n \geq 2$ and CII: ${\rm Sp}(p, q)/ {\rm Sp}(p) {\rm Sp}(q),\,p \geq q \geq 1 $.  (Recall that the real hyperbolic space case was treated in \cite{AdeWei12}.)

For the case of ${\rm SU}^*(2n)/{\rm Sp}(n)$), $n \geq 2$, we use the usual
parametrization of the real Cartan subalgebra of $\su(2n)$.  The Satake diagram gives
$\mathfrak a$ to be
\[\left\lbrace(t_1, t_1, t_2, t_2, \cdots, t_n, t_n): t_i \in \R,  \sum_{i=1}^{i=n}  \, t_i = 0 \right\rbrace.\]

The positive restricted roots can be taken to be $ t_i - t_j, 1 \leq i < j \leq n$, each with
multiplicity $4$. The dual renormalized Killing form is such that the linear forms $x_i$ are orthonormal.
In particular, an element of $\af$ has norm $1$ if and only if $\sum_i t_i^2 = \frac{1}{2}$. One easily checks
that the maximum value of $t_i - t_j$ on norm $1$ elements of $\af$ is $1$. For $t_1 - t_2$ this is
realised by the element $H = \frac{1}{2}(1, 1, -1, -1, 0, \cdots, 0)$. Hence $C_1 = 1 < \sqrt{2} = C_2$.
Note that the norm $1$ element $\frac{1}{\sqrt{2}}(1, 0, -1, 0, \cdots, 0)$ in the Cartan subalgebra produces
the value $\sqrt{2}$ with $x_1 - x_3$ (which gives the restricted root $t_1 - t_2$), but it does not
lie in $\mathfrak a$.

Similar computations for the CII case, which we leave to the reader, show that $C_1 = 1 < \sqrt{2} = C_2$ as well.
\end{proof}

When $G$ is a classical Lie group and $G/K$ is of type III the above values
of $C_i$ agree with those in the appendix of \cite{Wang69} after adjusting
for the Killing form. For convenient reference, we summarize the results for the exceptional Lie groups developed in this article in the following proposition.

\begin{theorem}\label{exceptionalgps} Let $G/K$ be a simply connected irreducible symmetric
space of non-compact type with $G$ an exceptional Lie group. Then with respect to the metric
induced by the renormalized Killing form, the constants $C_1 = C_2 = \sqrt{2}$, except when
$G/K$  is the Cayley projective plane ${\rm F}_{4(-20)}/{\rm Spin}(9)$ or ${\rm E}_{6(-26)}/{\rm F}_4$,
in which case $C_1 = 1 < \sqrt{2} = C_2$.
\end{theorem}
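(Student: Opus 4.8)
The plan is to recognize the statement as a specialization of what has already been established, so the argument is essentially a matter of enumerating the exceptional symmetric spaces and invoking the appropriate earlier result for each. First I would separate the exceptional irreducible symmetric spaces of non-compact type into their two structural classes. The type IV spaces are the complex forms $G_2^c/G_2$, $F_4^c/F_4$, $E_6^c/E_6$, $E_7^c/E_7$, and $E_8^c/E_8$ built from the compact exceptional groups; for each of these, Proposition~\ref{type4} applies verbatim and gives $C_1 = C_2 = \sqrt{2}$, so none of the type IV exceptional spaces contributes to the exceptional list.

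For the type III spaces I would run through the entries of Helgason's Table VI whose isometry group is exceptional, namely EI through EIX together with FI, FII (the Cayley plane ${\rm F}_{4(-20)}/{\rm Spin}(9)$), and G, and apply Theorem~\ref{Ci-thm}. By the dichotomy established there, each such space satisfies either $C_1 = C_2 = \sqrt{2}$ or $C_1 = 1 < \sqrt{2} = C_2$, with the second value occurring precisely for those members of the list (1)--(4) in that theorem that have exceptional $G$. Hence I only need to match the exceptional spaces against that list. The Cayley plane is the unique rank-one symmetric space with exceptional isometry group, so it falls under case (1); and ${\rm E}_{6(-26)}/{\rm F}_4$ is exactly case (4). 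Since cases (2) and (3) involve only the classical groups ${\rm SU}^*(2n)$ and ${\rm Sp}(m+n)$, no other exceptional space can produce the second value.

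It then remains to confirm that every remaining exceptional type III space indeed lies in the $C_1 = C_2 = \sqrt{2}$ regime, which is already visible inside the proof of Theorem~\ref{Ci-thm}. The split (normal real) forms EI, EV, EVIII, FI, and G are covered by the normal-real-form argument there, for which $\af$ is a full real Cartan subalgebra and $P^+$ contains the maximal root. The remaining forms EII, EIII, EVI, EVII, and EIX are of ADE type and each carries a restricted root of odd multiplicity arising from a long root, so Observation~\ref{Ob} yields $C_1 = \sqrt{2}$ as well. Collecting these observations gives the stated trichotomy, with the two announced exceptions.

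The only point requiring care is the bookkeeping: making the enumeration of exceptional symmetric spaces genuinely exhaustive. In particular I must not overlook the complex type IV forms, which are handled by Proposition~\ref{type4} rather than Theorem~\ref{Ci-thm}, and I must correctly recognize the Cayley plane as the rank-one instance falling under case (1) of Theorem~\ref{Ci-thm} rather than treating it as an unrelated exception. There is no substantive new computation beyond those already carried out in the proofs of Proposition~\ref{type4} and Theorem~\ref{Ci-thm}.
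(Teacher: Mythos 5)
Your proposal is correct and follows essentially the same route as the paper: the paper offers no separate proof of this theorem, presenting it explicitly as a summary for exceptional $G$ of what was already established in Proposition~\ref{type4} (covering the type~IV complex forms) and Theorem~\ref{Ci-thm} (covering the type~III forms, with the Cayley plane falling under case~(1) and ${\rm E}_{6(-26)}/{\rm F}_4$ under case~(4)), which is precisely your enumeration. Your bookkeeping is in fact slightly cleaner than the paper's, since you correctly list the split form of ${\rm E}_8$ as EVIII among the normal real forms, where the paper's proof of Theorem~\ref{Ci-thm} writes EVII (evidently a typographical slip, as EVII also appears in its odd-multiplicity ADE list).
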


\section{The Sectional Curvatures of Semi-Simple Lie Groups} \label{Sec:Sect}

Given a semi-simple Lie algebra $\mathfrak g$ with Cartan decomposition $\mathfrak g =\mathfrak k\oplus\mathfrak p$, let $U,V, W \in \mathfrak k$ and $X,Y, Z \in \mathfrak p$ denote left invariant vector fields. The curvature formulas for the canonical metric of a semisimple non-compact Lie group were derived in \cite{AdeWei12}.

\begin{prop}  \label{curvatures}
	\begin{eqnarray}
	R(U,V)W & = & \frac 14 [ [V,U], W],   \label{R-UVW}  \\
	R(X,Y)Z & = & - \frac 74 [ [X,Y],  Z ],    \label{R-XYZ}\\
	R(U,X)Y & = & \frac 14 [[X,U], Y]  - \frac 12 [[Y,U], X] , \label{R-UXY}  \\
	R(X,Y)V & = & \frac 34 [X, [V,Y]] + \frac 34 [Y, [X,V]] = \frac 34 [V, [X,Y]].  \label{R-XYV}
	\end{eqnarray}
	In particular,
	\begin{eqnarray}
	\langle R(U,V)W, X \rangle & = & 0,   \label{R-mix1}\\
	\langle R(X,Y)Z, U\rangle & = &0,  \label{R-mix2}  \\
	\langle R(U,V)V,U\rangle & =  & \frac 14 \|[U,  V]\|^2,  \label{R-UV}\\
	\langle R(X,Y)Y,X \rangle & = & -  \frac 74  \|[X,  Y]\|^2, \label{R-XY}\\
	\langle R(U,X)X,U\rangle & =  & \frac 14   \|[U,  X]\|^2.   \label{R-UX}
	\end{eqnarray}
\end{prop}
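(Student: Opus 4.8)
The plan is to derive the four curvature tensors directly from the Levi--Civita connection of the left-invariant metric, and then read off the five inner-product identities by pairing. Since $\langle\cdot,\cdot\rangle$ is constant on left-invariant fields, the Koszul formula collapses to
\[
\nabla_X Y = \frac{1}{2}\bigl([X,Y] - (\ad X)^{*} Y - (\ad Y)^{*} X\bigr),
\]
where $(\ad X)^{*}$ is the metric-adjoint of $\ad X$. The first genuine computation is to identify this adjoint. Using $\langle X,Y\rangle = -B(X,\theta Y)$, the $\Ad(G)$-invariance of $B$ (so $B([X,Y],Z)=-B(Y,[X,Z])$), and the fact that $\theta$ is an involutive automorphism, I would show $(\ad X)^{*} = -\ad(\theta X)$. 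Equivalently, $\ad U$ is skew-symmetric for $U\in\kf$ and $\ad X$ is self-adjoint for $X\in\p$; this single fact drives everything that follows.

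Feeding this into the connection formula and using the grading relations $[\kf,\kf]\subseteq\kf$, $[\kf,\p]\subseteq\p$, $[\p,\p]\subseteq\kf$, I would obtain, for $U,V\in\kf$ and $X,Y\in\p$, the four connection rules
\[
\nabla_U V = \frac{1}{2}[U,V],\quad \nabla_X Y = \frac{1}{2}[X,Y],\quad \nabla_U X = \frac{3}{2}[U,X],\quad \nabla_X U = \frac{1}{2}[U,X].
\]
The relation $\nabla_U X - \nabla_X U = [U,X]$ gives a quick torsion check. Next I would substitute these into $R(A,B)C = \nabla_A\nabla_B C - \nabla_B\nabla_A C - \nabla_{[A,B]}C$ for each of the four input patterns, at every stage recording whether the intermediate bracket lies in $\kf$ or $\p$ (this is what selects the coefficient $\tfrac12$ versus $\tfrac32$), and then collapsing the resulting double brackets with the Jacobi identity. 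For example, $R(U,V)W$ produces $\frac14[U,[V,W]] - \frac14[V,[U,W]] - \frac12[[U,V],W]$; Jacobi turns the first two terms into $\frac14[[U,V],W]$, leaving $-\frac14[[U,V],W]=\frac14[[V,U],W]$, which is \eqref{R-UVW}. The cases \eqref{R-XYZ}, \eqref{R-UXY} and \eqref{R-XYV} run identically, the last requiring the cyclic form $[[A,B],C]+[[B,C],A]+[[C,A],B]=0$.

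The five scalar identities are then immediate consequences. The mixed ones \eqref{R-mix1} and \eqref{R-mix2} vanish by the $\kf\perp\p$ grading, since $R(U,V)W\in\kf$ while $X\in\p$, and $R(X,Y)Z\in\p$ while $U\in\kf$. For \eqref{R-UV}, \eqref{R-XY} and \eqref{R-UX} I would pair the bracket expressions with the remaining field and move one $\ad$ across using the skew/self-adjointness from step one; e.g. for \eqref{R-XY}, $\langle [[X,Y],Y],X\rangle = -\langle \ad Y([X,Y]),X\rangle + \langle[X,Y],\ad Y\,X\rangle$-type manipulation yields $\|[X,Y]\|^{2}$, and likewise $\langle[[U,X],X],U\rangle=\|[U,X]\|^{2}$ using that $\ad X$ is self-adjoint. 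I expect the main obstacle to be purely organizational: the asymmetry $\nabla_U X = \frac32[U,X]$ against $\nabla_X U = \frac12[U,X]$ is precisely what makes the mixed curvatures nonzero, so it must be gotten exactly right, and the sign/coefficient bookkeeping across the four grading cases (together with choosing the correct cyclic Jacobi form in \eqref{R-XYV}) is where errors are most likely. None of the steps is deep once the adjoint identity $(\ad X)^{*}=-\ad(\theta X)$ is in hand.
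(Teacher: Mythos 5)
Your proposal is correct: the Koszul formula for left-invariant metrics, the adjoint identity $(\ad X)^{*}=-\ad(\theta X)$, the resulting connection coefficients $\nabla_U V=\tfrac12[U,V]$, $\nabla_X Y=\tfrac12[X,Y]$, $\nabla_U X=\tfrac32[U,X]$, $\nabla_X U=\tfrac12[U,X]$, and the subsequent Jacobi-identity bookkeeping all check out and reproduce each of the stated formulas, including the coefficients $-\tfrac74$ and $\tfrac34$. This is essentially the same derivation the paper relies on (it cites \cite{AdeWei12} for these formulas, where they are obtained by exactly this left-invariant connection computation), so there is nothing to add.
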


\begin{remark} These formulas also apply to any scale of the canonical metric.  \end{remark}

\begin{prop}\label{seccur-G} Let $G$ be a semi-simple Lie group. Let $C_1$ and $C_2$ be the constants defined in Section~\ref{Sec:Cs} with respect to the inner product on $G$ defined in Section~\ref{Sec:Determine}. Let $\alpha=C_2/C_1$.

If $\alpha=1$ the sectional curvatures of $G$ are bounded above by $\frac{49}{52}\times C_1^2$. If $\alpha=\sqrt{2}$ the sectional curvatures of $G$ are bounded above by $1.17259\times C_1^2$.
\end{prop}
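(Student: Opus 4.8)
The plan is to estimate, for an arbitrary $\langle\cdot,\cdot\rangle$-orthonormal pair $E,F$, the sectional curvature $K(E,F)=\langle R(E,F)F,E\rangle$, and then to maximize the estimate over all such pairs. First I would split $E=U+X$ and $F=V+Y$ along $\g=\kf\oplus\p$, so that $\|U\|^2+\|X\|^2=\|V\|^2+\|Y\|^2=1$. Expanding $\langle R(E,F)F,E\rangle$ multilinearly gives sixteen terms; by the vanishing relations (\ref{R-mix1}) and (\ref{R-mix2}) only those with an even number of $\p$-entries survive. Using the formulas (\ref{R-UVW})--(\ref{R-XYV}) of Proposition~\ref{curvatures}, the curvature symmetries, and the fact that $\ad W$ is skew-adjoint for $W\in\kf$ while $\ad Z$ is self-adjoint for $Z\in\p$, I would reduce the surviving terms (the two ``diagonal'' mixed terms give $\langle R(U,Y)Y,U\rangle$ and $\langle R(X,V)V,X\rangle$, while the four ``cross'' terms combine in pairs) to
\begin{align*}
K(E,F)=\tfrac14\|[U,V]\|^2-\tfrac74\|[X,Y]\|^2 &+\tfrac14\|[V,X]\|^2+\tfrac14\|[U,Y]\|^2\\
&+\tfrac32\langle[X,Y],[U,V]\rangle+\tfrac12\langle[X,V],[Y,U]\rangle-\langle[X,U],[Y,V]\rangle.
\end{align*}

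For the upper bound the decisive move is that the only negative contribution, $-\tfrac74\|[X,Y]\|^2$, can simply be discarded, after which each of the three inner-product terms is estimated by Cauchy--Schwarz against the product of the norms of its bracket factors. Inserting the defining inequalities $\|[U,V]\|\le C_2\|U\|\,\|V\|$ for the $\kf$-$\kf$ bracket and $\|[A,B]\|\le C_1\|A\|\,\|B\|$ for every bracket containing a $\p$-factor (legitimate since $C_1\le C_2$), and abbreviating $u=\|U\|,\ x=\|X\|,\ v=\|V\|,\ y=\|Y\|$ and $\alpha=C_2/C_1$, the three cross terms coalesce into the single monomial $\tfrac32(\alpha+1)uvxy$ and one obtains
\[
K(E,F)\le C_1^2\Big[\tfrac{\alpha^2}{4}\,u^2v^2+\tfrac14\,v^2x^2+\tfrac14\,u^2y^2+\tfrac32(\alpha+1)\,uvxy\Big].
\]

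It then remains to maximize the bracketed quantity subject to $u^2+x^2=v^2+y^2=1$. I would do this in two stages. Fixing $u,x$ and writing $v=\cos\psi,\ y=\sin\psi$, the expression takes the form $a+b\cos2\psi+c\sin2\psi$, whose maximum is $a+\sqrt{b^2+c^2}$; substituting $t=u^2$ then leaves the single-variable problem of maximizing
\[
\Phi(t)=\tfrac18\big(1+\alpha^2 t\big)+\sqrt{\tfrac1{64}\big(1+(\alpha^2-2)t\big)^2+\tfrac{9}{16}(\alpha+1)^2\,t(1-t)}
\]
over $t\in[0,1]$. For $\alpha=1$ the maximum is attained at $t=7/13$, where the radicand is exactly $9/16$, giving $\Phi=49/52$; for $\alpha=\sqrt2$ a direct calculus computation gives $\max\Phi=1.17259\ldots$. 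Multiplying by $C_1^2$ yields the two stated bounds.

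The step I expect to be most delicate is this final optimization, because the constant $49/52$ is genuinely sensitive to how the cross terms are treated: retaining all three of them and performing the joint optimization over both circles is exactly what produces it, whereas a cruder estimate of $uvxy$ (for instance by AM--GM) would inflate the constant. A secondary point demanding care is the curvature bookkeeping of the first step---in particular confirming the coefficient $\tfrac32$ of $\langle[X,Y],[U,V]\rangle$ and that the remaining cross terms assemble into $\tfrac12\langle[X,V],[Y,U]\rangle-\langle[X,U],[Y,V]\rangle$---since these feed directly into the coefficient $\tfrac32(\alpha+1)$ that drives the whole estimate.
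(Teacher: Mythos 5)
Your proposal is correct and follows essentially the same route as the paper's proof: expand $\langle R(X+U,Y+V)(Y+V),X+U\rangle$ via Proposition~\ref{curvatures}, discard the only negative term $-\tfrac74\|[X,Y]\|^2$, bound the remaining diagonal and cross terms by Cauchy--Schwarz together with the defining inequalities for $C_1$ and $C_2$, and maximize the resulting expression subject to $\|U\|^2+\|X\|^2=\|V\|^2+\|Y\|^2=1$. The only difference is cosmetic: you perform the final optimization in two stages (angle parametrization in $V,Y$, then one-variable calculus in $t=\|U\|^2$), while the paper directly records the maximum $\tfrac{49}{13}$ (resp.\ $4.69036$) of the equivalent two-variable function; both give $\tfrac{49}{52}C_1^2$ and $1.17259\,C_1^2$.
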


\begin{proof} Again with $U,V \in \mathfrak{k}\mbox{ and } X, Y \in \mathfrak{p}$, we have by (\ref{R-mix1}) and (\ref{R-mix2})
	\begin{align*}
		\langle R(X+U,Y+V)Y+V,X+U \rangle  & =  \langle R(X,Y)Y,X\rangle + \langle R(U,V)V,U\rangle + \langle R(U,Y)Y,U\rangle \\
		&   + \langle R(X,V)V,X \rangle  + 2\langle R(X,Y)V, U\rangle + 2\langle R (X,V)Y,U \rangle.
	\end{align*}
	Assume that $\|U+X\|=1,~ \|V+Y\| = 1, \mbox{ and }\langle U+X, V+Y \rangle = 0$. Then \begin{equation}
	\|U\|^2 + \|X\|^2 =1,  \ \ \ \|V\|^2 + \|Y\|^2 =1. \label{unit} \end{equation}
	
	By (\ref{R-UV}), (\ref{R-UX}), and the definitions of $C_1$ and $C_2$,
	\begin{align*}
	\langle R(U,V)V,U\rangle & =   \frac 14 \| \ad U (V)\|^2 \le   \frac 14 C_2^2 \|U\|^2\|V\|^2\\
	\langle R(U,Y)Y,U\rangle & =   \frac 14 \| \ad Y (U)\|^2 \le   \frac 14 C_1^2 \|U\|^2\|Y\|^2\\
	\langle R(X, V)V,X\rangle & =   \frac 14 \| \ad X (V)\|^2 \le   \frac 14 C_1^2 \|X\|^2\|V\|^2.
	\end{align*}
	
	By (\ref{R-XY}),
	\[ \langle R(X,Y)Y,X \rangle  =  -  \frac 74  \|[X,  Y]\|^2 \le 0. \]
		
	By (\ref{R-XYV}),
	\begin{align*}
	\langle R(X,Y)V, U\rangle & =   \frac 34  \langle [U,V], [X,Y] \rangle  \\
	& \le   \frac 34 C_1 C_2 \|X\|\|Y\|\|U\| \|V\|.
	\end{align*}
	
	Similarly,  by (\ref{R-UXY}),
	\[
	\langle R (X,V)Y,U \rangle \le \frac{3}{4}C_1^2 \|X\|\|Y\|\|U\| \|V\|.
	\]
	Combining the above inequalities and using (\ref{unit}), we have
	\begin{align*}
	\lefteqn{ \langle R(X+U,Y+V)Y+V,X+U \rangle } \\
	& \le  \frac 14 \left\{ C_2^2 \|U\|^2\|V\|^2 + C_1^2 \|U\|^2\|Y\|^2 + C_1^2 \|X\|^2\|V\|^2  + 6(C_1 C_2 + C_1^2) \|X\|\|Y\|\|U\| \|V\|   \right\} \\
	& =  \frac 14 C_1^2 \left\{ \|U\|^2+ \|V\|^2  + (\alpha^2 -2)  \|U\|^2\|V\|^2  + 6 (\alpha + 1) (1-\|U\|^2)^{1/2}(1-\|V\|^2)^{1/2}\|U\| \|V\| \right\}.
	\end{align*}
	
	When $\alpha=1$, the polynomial \[a^2+b^2 +(\alpha^2 -2) a^2b^2 +  6 (\alpha + 1) ab (1-a^2)^{1/2}(1-b^2)^{1/2}\] defined on $0 \le a \le 1,\ 0 \le b \le1$ achieves its maximum of $\frac{49}{13}$ at $\left(\sqrt{\frac{7}{13}},\sqrt{\frac{7}{13}}\right)$.
	
	When $\alpha = \sqrt{2}$, the maximum is $4.69036$. \end{proof}
	
	\begin{remark} Compared to the curvature estimates in \cite{AdeWei12} and \cite{AdeWei14}, the above estimate is more general and much improved. It also improves the estimate in \cite{CaoFu}. \end{remark}

\section{Orbifold Volume Bounds} \label{Sec:Volume}

In this section we indicate how to put together the results of the previous sections and derive the desired
explicit volume  bounds. Our calculations make this essentially straight-forward for any orbifold quotient of a symmetric spaces of non-compact type. We will exhibit the bounds in some cases of interest and make some comparisons between our bounds and known bounds in the literature. To do so, we fix a normalization of the metrics used to compute volumes. The standard choice in Riemannian geometry in non-positive curvature situations is to fix the Ricci curvature of an $N$-dimensional manifold to be $-(N-1)$, which is the Ricci curvature of hyperbolic space of constant curvature $-1$ and dimension $N$.

Accordingly, if $G/K$ is a non-compact irreducible symmetric space of dimension $N$, we let $g_0$ to be that multiple of the
Killing form metric on $G/K$ that satisfies \[\ric(g_0)=-(N-1)g_0,\] where $\ric$ denotes the Ricci tensor.
Note however that in general $g_0$ is induced neither by the Killing form nor the renormalized Killing form. The exact relationship
is deduced as follows.

Let $B_G$ denote the Killing form for $G^{\mathbb C}$, the complexification of $G$, and $B^{\prime}_G$ denote 
the renormalized Killing  form, i.e., the multiple of the Killing form such that the maximal root has length $\sqrt 2$. Then since $G$
is simple, \[B_G=\alpha_GB^{\prime}_G,\] where values of $\alpha_G$ for specific $G$ can be found in \cite[p. 583]{WZ85}.
We note that $\alpha_G$ is actually twice the dual Coxeter number for $G$.

Let $g_{_{B_G}}$ and $g_{_{B^{\prime}_G}}$ be the Riemannian metrics on $G/K$ induced by, respectively, $B_G$ and $B^{\prime}_G$. By \cite[Theorem 7.73]{Besse}, we have $\ric(g_{_{B_G}})= -\frac12g_{_{B_G}}$. It follows that \[g_0=\left(\frac{\alpha_G}{2(N-1)}\right)g_{_{B^{\prime}_G}}.\]

Our volume formula now follows from the above discussion and Theorem~\ref{Gun}. We state it formally in the following theorem.

\begin{theorem}\label{MainThm} Let $G/K$ be a non-compact irreducible symmetric space of dimension $N$, let $g_0$ be the metric on $G/K$ induced by the multiple of the Killing form such that the Ricci curvature is $-(N-1)$, and let $\Gamma$ be a lattice of $G$. Then
\[\Vol\left(\Gamma\backslash G/K,g_0\right)\geq\left(\frac{\alpha_G}{2(N-1)}\right)^{N/2}\times\frac{1}{\Vol\left(K,g_{_{B^{\prime}_G}}\right)}\times\frac{2(\pi/k)^{d/2}}{\Gamma_f(d/2)}\times\int_0^{rk^{1/2}}\sin^{d-1}\phi\,d\phi,\]
where, for the Lie group $G$, $\alpha_G$ is twice the dual Coxeter number, $k$ is an upper bound for the sectional curvature, $d$ is the dimension, and $r$ is the radius of a ball that can be embedded in the fundamental domain of $\Gamma$. Furthermore, $g_{_{B^{\prime}_G}}$ denotes the metric on $G/K$ induced by $1/\alpha_G$ times the Killing form, and $\Gamma_f$ denotes the gamma function.
\end{theorem}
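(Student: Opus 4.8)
The plan is to chain together the four ingredients already assembled in the preceding sections: the Ricci normalization $g_0 = \left(\frac{\alpha_G}{2(N-1)}\right) g_{_{B^{\prime}_G}}$ derived just before the statement, the Riemannian submersion volume identity from the introduction, H.~C. Wang's covolume bound (Theorem~\ref{RG}), and Gunther's comparison theorem (Theorem~\ref{Gun}). No new estimates are required; the substance of the argument is careful bookkeeping of which metric each volume is computed with, and then a single substitution.

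First I would handle the scaling. Since $g_0$ and $g_{_{B^{\prime}_G}}$ are proportional on the $N$-dimensional space $G/K$, and multiplying a metric by a positive constant $c$ multiplies $N$-dimensional volume by $c^{N/2}$, we get
\[
\Vol(\Gamma\backslash G/K, g_0) = \left(\frac{\alpha_G}{2(N-1)}\right)^{N/2} \Vol(\Gamma\backslash G/K, g_{_{B^{\prime}_G}}).
\]
Next, because $\pi \colon \Gamma\backslash G \to \Gamma\backslash G/K$ is a Riemannian submersion with totally geodesic fibres isometric to $(K, g_{_{B^{\prime}_G}})$, the product formula $\Vol(\Gamma\backslash G, g_{_{B^{\prime}_G}}) = \Vol(\Gamma\backslash G/K, g_{_{B^{\prime}_G}})\cdot \Vol(K, g_{_{B^{\prime}_G}})$ recorded in the introduction holds (in the orbifold sense when $\Gamma$ has torsion). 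Solving for the orbifold volume isolates the factor $1/\Vol(K, g_{_{B^{\prime}_G}})$ appearing in the statement.

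It then remains to bound $\Vol(\Gamma\backslash G, g_{_{B^{\prime}_G}})$ from below. Applying Theorem~\ref{RG} with the constants $C_1, C_2$ of Section~\ref{Sec:Determine} (so that $\rho$, $R_G$, and $\mathcal B_G$ all refer to $g_{_{B^{\prime}_G}}$), together with the remark immediately following it, yields $\Vol(\Gamma\backslash G, g_{_{B^{\prime}_G}}) \ge \Vol(B_e(r), g_{_{B^{\prime}_G}})$ with $r = R_G/2$. I would then apply Theorem~\ref{Gun} to this ball in $G$, taking $d = \dim G$ and $k$ the upper sectional curvature bound supplied by Proposition~\ref{seccur-G}, to get $\Vol(B_e(r), g_{_{B^{\prime}_G}}) \ge \frac{2(\pi/k)^{d/2}}{\Gamma_f(d/2)}\int_0^{rk^{1/2}}\sin^{d-1}\phi\,d\phi$. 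Substituting the two displayed identities into this inequality produces the claimed bound verbatim.

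The one hypothesis demanding genuine care is that of Theorem~\ref{Gun}: the ball $B_e(r)$ of radius $r = R_G/2$ must not meet the cut locus of $e$ in $(G, g_{_{B^{\prime}_G}})$, equivalently $r$ must not exceed the injectivity radius at $e$. This is where the smallness of the values $r \approx 0.098$ and $r \approx 0.114$ recorded after Theorem~\ref{RG} is used; Wang's control of the exponential map on balls of this size is what guarantees the ball embeds. The remaining, purely notational hazard is normalization consistency: the curvature bound $k$, the radius $R_G/2$, and the constants $C_1, C_2$ must all be taken with respect to the single metric $g_{_{B^{\prime}_G}}$, while the final volume is reported in the Ricci-normalized metric $g_0$, the conversion being exactly the scaling factor extracted in the first step.
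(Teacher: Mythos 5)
Your proposal is correct and follows the paper's own route essentially step for step: the rescaling $g_0=\left(\frac{\alpha_G}{2(N-1)}\right)g_{_{B^{\prime}_G}}$ derived just before the theorem, the Riemannian submersion identity $\Vol(\Gamma\backslash G)=\Vol(\Gamma\backslash G/K)\cdot\Vol(K)$ from the introduction, Wang's Theorem~\ref{RG} giving a ball of radius $r=R_G/2$ embedded in the fundamental domain, and Gunther's Theorem~\ref{Gun} applied with $d=\dim G$ and the curvature bound $k$ of Proposition~\ref{seccur-G}, all computed in the single metric $g_{_{B^{\prime}_G}}$. If anything, you are slightly more careful than the paper, which invokes Theorem~\ref{Gun} without explicitly verifying its cut-locus hypothesis for the ball of radius $R_G/2$.
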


Moreover, by Proposition~\ref{seccur-G} and Theorem~\ref{RG}, respectively, values for $k$ and $r$ can be derived from the constants $C_1$ and $C_2$, associated to each Lie group $G$, that were defined in Section~\ref{Sec:Cs}. Theorem~\ref{Ci-thm} proves that there are only two possibilities, $C_1 = C_2 = \sqrt{2}$ or $C_1 = 1$ and $C_2=\sqrt{2}$, and lists them for each group. From the discussion in Section~\ref{Sec:Cs}, and by Proposition~\ref{seccur-G}, we have $r=0.098$ and $k=1.885$ for the first possibility. In the second case, we have $r=0.114$ and $k=1.173$.

We use this formula to calculate explicit lower bounds for orbifold quotients of hyperbolic space, complex hyperbolic space, the octonionic hyperbolic plane, $G_{2(2)}/{\rm SO}(4)$ and $F_{4(4)}/({\rm Sp}(3){\rm Sp}(1)/\Delta\mathbb Z_2)$. We will need the explicit volumes of the compact subgroups $K$, with respect to the renormalized Killing form, of the non-compact simple Lie groups $G$. This can easily be computed from the embeddings of the simple factors of $K$ in $G$, together with the known formulas for the volumes of compact Lie groups (with respect to the Killing form) in the literature. Note that the series of compact classical Lie groups have successive quotients which are spheres. Thus, their volume can be computed by induction. 

\subsection*{Hyperbolic Space: $\mathbf H^n={\rm SO}_0(n,1)/{\rm SO}(n), (n\geq4)$} 
\[N=\dim(\mathbf H^n)=n,\quad d=\dim({\rm SO}_0(n,1))=\frac12n(n+1),\quad \alpha_{{\rm SO}_0(n,1)}=2n-2,\]
\[\text{ and } g_0=\left(\frac{2n-2}{2n-2}\right)g_{_{B^{\prime}_G}}=g_{_{B^{\prime}_G}}.\]

Note that the last fact says that for this special case the Ricci normalized metric coincides with the metric induced by the
renormalized Killing form.

For ${\rm SO}_0(n,1)$, $C_1 = 1$ and $\sqrt{2} = C_2$. Hence, $r=0.114$ and $k=1.173$. Finally, \[\Vol\left({\rm SO}(2p),g_{_{B^{\prime}_G}}\right)=\frac{2^{p-1}(2\pi)^{p^2}}{(2p-2)!(2p-4)!\cdots4!2!}\] and \[\Vol\left({\rm SO}(2p+1),g_{_{B^{\prime}_G}}\right)=\frac{2^{p}(2\pi)^{p^2+p}}{(2p-1)!(2p-3)!\cdots5!3!1!}.\]

%
%

Therefore,
\[
\Vol\left(\mathbf H^{2p}/\Gamma,g_0\right)\geq\frac{\pi^{p/2}(2p-2)!(2p-4)!\cdots2!}{2^{p^2+p-2}\Gamma\left(p^2+\frac{p}{2}\right)(1.17259)^{(p^2+\frac{p}{2})}}\times\int_0^{0.123}\sin^{(2p^2+p-1)}\phi\,d\phi.
\]
Similarly, 
\[
\Vol\left(\mathbf H^{2p+1}/\Gamma,g_0\right)\geq\frac{\pi^{\frac12(p+1)}(2p-1)!(2p-3)!\cdots3!}{2^{p^2+2p-1}\Gamma\left(\frac{2p^2+3p+1}{2}\right)(1.17259)^{(p^2+\frac32p+\frac12)}}\times\int_0^{0.123}\sin^{(2p^2+3p)}\phi\,d\phi.
\]

%
%
%

\begin{remark} For $n=4$, the bound derived in this article is $\Vol\left(\mathbf H^{4}/\Gamma,g_0\right)\ge 5.94845\times10^{-13}$. The corresponding bound using the results of \cite{Gelander} is $118.435\times10^{-52}$. The smallest arithmetic hyperbolic 4-orbifold, which is conjectured to extremal among all hyperbolic 4-orbifolds, has volume approximately $1.8\times10^{-3}$ (see \cite{Belo}).
\end{remark}

\subsection*{Complex Hyperbolic Space: $\mathbb C\mathbf H^n={\rm SU}(n,1)/[{\rm S}\big({\rm U}(n){\rm U}(1)\big)],(n\geq2)$}

\[N=\dim(\mathbb C\mathbf H^n)=2n,\quad d=\dim({\rm SU}(n,1))=n^2+2n,\quad \alpha_{{\rm SU}(n,1)}=2n+2,\]
\[\text{ and } g_0=\left(\frac{2n+2}{4n-2}\right)g_{_{B^{\prime}_G}}=\left(\frac{n+1}{2n-1}\right)g_{_{B^{\prime}_G}}.\]

For ${\rm SU}(n,1)$, $C_1 = C_2 = \sqrt{2}$. Hence, $r=0.098$ and $k=1.885$. We also calculate \[\Vol\left({\rm S}\big({\rm U}(n){\rm U}(1)\big),g_{_{B^{\prime}_G}}\right)=\frac{\sqrt{n+1}\,(2\pi)^{\frac12(n^2+n)}}{(n-1)!\cdots2!}.\] 

We then have,
\[\Vol\left(\mathbb C\mathbf H^n/\Gamma,g_0\right)\geq\left(\frac{n+1}{2n-1}\right)^n\frac{\pi^{\frac{n}{2}}(n-1)!\cdots2!}{\sqrt{n+1}(1.88462)^{\frac12n^2+n}\Gamma\left(\frac12n^2+n\right)2^{\frac{n^2+n-2}{2}}}\times\int_0^{0.134}\sin^{n^2+2n-1}\phi\,d\phi.
\]

%

\begin{remark} For $n=2$, we have $\Vol\left(\mathbb C\mathbf H^2/\Gamma,g_0\right)\geq 7.86511\times10^{-11}$. There is no known, or conjectured, lower bound for complex hyperbolic 2-orbifolds.
\end{remark}

\subsection*{Octonionic Hyperbolic Plane: $\mathbb O\mathbf H^2={\rm F_{4(-20)}}/{\rm Spin}(9)$} 

 \[N=\dim(\mathbb O\mathbf H^2)=16,\quad d=\dim({\rm F_{4(-20)}})=52, \quad \text{ and }\alpha_{_{\rm F_{4(-20)}}}=18.\] Note that since the index $[{\rm F_{4(-20)}}:{\rm Spin}(9)]=1$, the renormalized Killing form of ${\rm F_{4(-20)}}$ restricted to ${\rm Spin}(9)$ is the same as the renormalized Killing form of ${\rm Spin}(9)$. As ${\rm Spin}(9)$ is the double cover of ${\rm SO}(9)$, we have \[\Vol\left({\rm Spin}(9),g_{_{B^{\prime}_G}}\right)=2\Vol\left({\rm SO}(9),g_{_{B^{\prime}_G}}\right)=\frac{2^5(2\pi)^{20}}{7!5!3!}.\]
 
 The values of $C_1$ and $C_2$, and therefore $r$ and $k$, for ${\rm F_{4(-20)}}$ are the same as for ${\rm SO}_0(n,1)$. Therefore,
\begin{align*}
\Vol\left(\mathbb O\mathbf H^2/\Gamma,g_0\right)&\geq \left(\frac{18}{30}\right)^8\times\frac{7!5!3!}{2^5(2\pi)^{20}}\times\frac{2(\pi/1.723)^{26}}{\Gamma_f(26)}\times\int_0^{0.123}\sin^{51}\phi\,d\phi\\
&\ge \frac{7!5!3!}{25!}\frac{3^8}{5^82^{24}}\frac{\pi^6}{(1.17259)^{26}}\int_0^{0.12344}\sin^{51}\phi\,d\phi\\
&\approx 3.46914\times 10^{-76}.
\end{align*} 

\begin{remark} This is the first explicit bound for the volume of octonionic hyperbolic 2-orbifolds.\end{remark}

\subsection*{\boldmath{$G_{2(2)}/{\rm SO}(4)$}}  

 \[N=\dim(G_{2(2)}/{\rm SO}(4))=8,\quad d=\dim(G_{2(2)})=14,\quad\text{and}\quad\alpha_{_{G_{2(2)}}}=8.\]
\[\text{and} \Vol\left({\rm SO}(4),g_{_{B^{\prime}_G}}\right)= 48\sqrt{3}\pi^4.\]

As $G_{2(2)}$ has the same $C_i$ values as ${\rm SU}(n,1)$,
\begin{align*}
\Vol\left(\Gamma\backslash G_{2(2)}/{\rm SO}(4),g_0\right)&\geq \left(\frac{8}{14}\right)^4\times\frac{1}{48\sqrt{3}\pi^4}\times\frac{2(\pi/1.885)^{7}}{\Gamma_f(7)}\times\int_0^{0.134}\sin^{13}\phi\,d\phi\\
&\geq \frac{32\pi^{3}}{7^4(1.885)^76!3\sqrt{3}}\int_0^{0.134}\sin^{13}\phi\,d\phi\\
&\approx 5.427\times10^{-20}
\end{align*} 

\subsection*{\boldmath{$F_{4(4)}/({\rm Sp}(3){\rm Sp}(1)/\Delta\mathbb Z_2)$}}  

 \[N=\dim(F_{4(4)}/[{\rm Sp}(3){\rm Sp}(1)])=28,\quad d=\dim(F_{4(4)})=52,\quad\text{and}\quad\alpha_{_{F_{4(4)}}}=18.\]
\[\text{and} \Vol\left([{\rm Sp}(3){\rm Sp}(1)],g_{_{B^{\prime}_G}}\right)=\frac{2^{21}\pi^{14}}{5!3!}.\]

Again, $F_{4(4)}$ has the same $C_i$ data as ${\rm SU}(n,1)$. Hence,
\begin{align*}
\Vol\left(\Gamma\backslash F_{4(4)}/[{\rm Sp}(3){\rm Sp}(1)],g_0\right)&\geq \left(\frac{18}{54}\right)^{14}\times\frac{5!3!}{2^{21}\pi^{14}}\times\frac{2(\pi/1.885)^{26}}{\Gamma_f(26)}\times\int_0^{0.134}\sin^{51}\phi\,d\phi\\
&\geq \frac{5!\pi^{12}}{3^{13}2^{19}(1.885)^{26}25!}\int_0^{0.134}\sin^{51}\phi\,d\phi\\
&\approx 4.015\times10^{-84}
\end{align*}

\section{History}\label{history}  The \emph{rank one} locally symmetric spaces of non-compact type or \emph{hyperbolic spaces}, denoted by $\mathbb K\mathbf H^n$, where $\mathbb K$ is $\mathbb C$, $\mathbb H$, or $\mathbb O$, are the analogues of $\mathbf H^n$ defined over the complex, quaternion, and octonion algebras. Recall that an orbifold $\mathbb K\mathbf H^n/\Gamma$, is a \emph{manifold} if $\Gamma$ does not contain elliptic elements; \emph{cusped} (non-compact) if $\Gamma$ contains a parabolic element; \emph{closed} (compact) if $\Gamma$ does not contain a parabolic element; and \emph{arithmetic} if $\Gamma$ can be derived by a specific number-theoretic construction (see \cite{Belo}).  Much of the literature on volume bounds for hyperbolic orbifolds involves these subcases and their intersections.

A survey of results of lower bounds of hyperbolic orbifolds, and subsequent improvements, that depend only on dimension and the algebra of definition, is given below. This is followed by an exposition of the cases where sharp bounds are known. Notably, in each case where sharp bounds have been established, or even conjectured, the corresponding orbifold (or manifold) is arithmetic. Furthermore, as shown by Corlette \cite{Corlette} and Gromov and Schoen \cite{GroScho}, for $\mathbb K=\mathbb H,n\geq2$, and $\mathbb K=\mathbb O, n=2$, \emph{all} hyperbolic orbifolds are arithmetic.

\subsection*{Lower Bounds} An explicit lower bound for the volume of a real hyperbolic $n$-manifold, depending only on dimension, was constructed by Martin \cite{Martin89} and improved by Friedland and Hersonsky \cite{FriHer}. An analogous result for orbifolds was given by Adeboye and Wei \cite{AdeWei12}. The results in \cite{AdeWei12} are sharper than those of \cite{Martin89} and \cite{FriHer}, despite the consideration of a larger class of orbit spaces.

Following the method of \cite{Martin89}, Xie, Wang, and Jiang \cite{XieWangJiang} produced a manifold bound in the complex hyperbolic setting. A volume bound for complex hyperbolic $n$-orbifolds was given in \cite{AdeWei14}. Recent results for quaternionic hyperbolic space are a manifold bound due to Cao \cite{Cao16} and an orbifold bound due to Cao and Fu \cite{CaoFu}.

\subsection*{Real Hyperbolic Orbifolds} The smallest hyperbolic 2-orbifold, with area $\pi/21$, is the sphere with 3 cone points of order 2, 3 and 7. The smallest compact and non-compact hyperbolic 2-manifolds are the surface of genus 2 and the once puncture torus. They have area, respectively,  $4\pi$ and $2\pi$. The smallest hyperbolic 3-orbifold, constructed from a tetrahedral reflection group, has volume 0.03905...(Gehring and Martin \cite{GehringMartin09}). The Weeks manifold, obtained by Dehn surgery on the Whitehead link, is the hyperbolic 3-manifold with minimum volume of 0.9427...(Gabai, Meyerhoff and Milley \cite{GMM}). Each of these orbifolds has been shown to be arithmetic (Chinburg and Friedman \cite{Chin1}, Chinburg et al \cite{Chin2}).

The smallest volume non-compact hyperbolic $n$-orbifolds, for dimensions $n\leq9$, were given by Hild \cite{Hild}. These orbifolds have also been shown to be arithmetic. Indeed, the minimal volume compact and non-compact arithemetic hyperbolic $n$-orbifolds, for each dimension $n\geq4$, were identified by Belolipetsky \cite{Belo,Belo2} and Belolipetsky and Emery \cite{BeloEm}.

A comprehensive survey of the results of, and the methodology for, the study of real hyperbolic orbifolds of small volume is \cite{BeloICM}.

\subsection*{Complex Hyperbolic Orbifolds} Hersonsky and Paulin use the Chern-Gauss-Bonnet formulas to prove that the smallest closed complex hyperbolic 2-manifold has volume $8\pi^2$ \cite{HerPau}. Work on the classification of fake projective planes by Prasad and Yeung \cite{PraYeu,PraYeu2} and Cartwright and Steger \cite{CartSte} produced 51 explicit examples. Parker proved that smallest non-compact complex hyperbolic 2-manifold has volume $8\pi^2/3$ and found one such example \cite{Parker}.

In addition, Parker \cite{Parker} identified two non-compact complex hyperbolic 2-orbifolds with volume $\pi^2/27$ and conjectured them to be minimal within that category. Stover proved that these orbifolds are the smallest arithmetic non-compact complex hyperbolic 2-orbifolds \cite{Sto}. In \cite{EmSto}, Emery and Stover determine, for each dimension $n$, the minimum volume cusped arithmetic complex hyperbolic orbifold. However, compact orbifolds with smaller volume are known \cite{Saut}.

\bibliography{RankOne}

\end{document}